\documentclass[12pt,a4paper]{article}

\usepackage[english]{babel}

\usepackage{geometry}
\usepackage[T1]{fontenc}
\usepackage{relsize}
\usepackage[T1]{fontenc}
\usepackage[utf8]{inputenc} 
\usepackage{lmodern}
\usepackage{bigints}
\usepackage{relsize}
\usepackage{amssymb}
\usepackage[utf8]{inputenc}
\usepackage{amsmath}
\usepackage{amsthm}
\usepackage{graphicx}
\usepackage[colorlinks,linkcolor=black, citecolor=black]{hyperref}
\numberwithin{equation}{section}
\newtheorem{dfn}{Definition}[section]

\newtheorem{thm}{Theorem}[section]
\newtheorem{prop}{Proposition}[section]
\newtheorem{cor}{Corollary}[section]

\newtheorem{lem}{Lemma}[section]
\newtheorem{rem}{Remark}[section]

\newcommand{\bt}{\begin{thm}}
	\newcommand{\et}{\end{thm}}
\newcommand{\bl}{\begin{lem}}
	\newcommand{\el}{\end{lem}}
\newcommand{\bp}{\begin{prop}}
	\newcommand{\ep}{\end{prop}}
\newcommand{\bc}{\begin{cor}}
	\newcommand{\ec}{\end{cor}}

\newcommand{\dint}{\displaystyle\int}
\title{The best Ulam constant of the n-th order linear differential operator with constant coefficients}
\author{Jawad Boutarfass, Iz-iddine EL-Fassi$^{\star}$ and Lahcen Oukhtite
	\\ Department of Mathematics, Faculty of Sciences and Techniques,\\
	S. M. Ben Abdellah University, Fez, Morocco\\ }

\begin{document}
	\renewcommand{\thefootnote}{}
	\footnotetext{$^{\star}$Corresponding author.\\
		E-mail addresses: jawad.boutarfass@usmba.ac.ma (J. Boutarfass),  iziddine.elfassi@usmba.ac.ma; izidd-math@hotmail.fr (Iz. EL-Fassi), lahcen.oukhtite@usmba.ac.ma (L. Oukhtite)}
	\date{}
	\maketitle

\begin{abstract}
	Let $X$ be a Banach space over the field $\mathbb{K} \in \{\mathbb{R}, \mathbb{C}\}$, and let $L:\mathcal{C}^{n}(\mathbb{R},X)\to\mathcal{C}(\mathbb{R},X)$ be the $n$-th order linear differential operator defined by $Ly := y^{(n)} + a_1 y^{(n-1)} + \cdots + a_n y$, where $a_1,\ldots,a_n \in \mathbb{K}$ are fixed scalars and $n$ is a fixed positive integer. It is well known that $L$ is Ulam stable if and only if its characteristic equation has no roots on the imaginary axis. In this paper, we determine the best Ulam constant of $L$ under this condition, for an arbitrary order $n$ and without any restriction on the multiplicities of the roots of the characteristic equation. This improves and completes several known Ulam stability results for this operator and provides the best estimate possible for the error between approximate and exact solutions of the equation $Ly=f$, where $f\in\mathcal{C}(\mathbb{R},X)$. Moreover, as an auxiliary result, we give an explicit formula for the general solution of this equation.
\end{abstract}

	{\bf Keywords}: Ulam stability, Best Ulam constant, Linear differential operator.
	
	{\bf MSC (2020)}: 34D20, 34A30, 39B82,  41A65, 47E05.
	
	\section{Introduction}
	\hspace*{3mm}
Differential equations are among the most powerful tools for describing natural and social phenomena, arising in physics, engineering, biology, and economics. This modeling paradigm has given rise to a rich and ever-evolving theory devoted to the study of such equations and their solutions. For general background, we refer to~\cite{Boyce,Braun,Chicone,Coddington,Coddington2,Hartman,Perko,Teschl}.

In many situations (e.g., those arising from the use of numerical methods) only approximate solutions of a differential equation are available, and it is natural to ask how far such approximate solutions may deviate from exact ones. This question lies at the heart of Ulam stability theory, a subject closely connected with several areas of mathematics, including the theories of shadowing, approximation, fixed point, perturbation, and optimization; see, e.g., \cite{Backes,Brzdek-fixedpoint,Ciplinski,Tabor,Yuan}.

Motivated by this question, in this paper we focus on the Ulam stability of linear differential operators with constant coefficients. To be more precise, throughout the paper, let $\mathbb{K}$ denote the field of real or complex numbers, let $X$ be a Banach space over $\mathbb{K}$, and let $n$ be a fixed positive integer. We study the Ulam stability of the $n$-th order linear operator $L:\mathcal{C}^{n}(\mathbb{R},X)\to\mathcal{C}(\mathbb{R},X)$ defined by
\[
Ly:= y^{(n)}
+a_{1}y^{(n-1)}
+\cdots
+a_{n}y,
\qquad
y\in\mathcal{C}^{n}(\mathbb{R},X),
\]
where $a_1,\ldots,a_n\in\mathbb{K}$ are fixed scalars. Here, $\mathcal{C}(\mathbb{R},X)$ denotes the vector space of all continuous functions $f:\mathbb{R}\to X$, and $\mathcal{C}^{n}(\mathbb{R},X)$ denotes the vector subspace of $\mathcal{C}(\mathbb{R},X)$ consisting of all $n$-times differentiable functions whose $n$-th derivative is continuous. Our main objective is to completely solve the problem of determining the best Ulam constant of this operator, and thereby complete the results obtained in~\cite{Baias-RACSAM,Baias-Mathematics,Baias-Axioms,Miura-JMAA-2003,Onitsuka,Takahasi-Korean}.

Let us first recall some basic notions from Ulam stability theory that will be used throughout the paper. We begin with the notion of a gauge on a vector space $E$ over the field $\mathbb{K}$ (see also~\cite{Brzdek1,Miura}).

\begin{dfn}
	A mapping $\rho_E:E\to[0,\infty]$ is said to be a \textit{gauge} on $E$ if:
	\begin{enumerate}
		\item[$\mathrm{(i)}$] $\rho_{E}(x)=0$ if and only if $x=0$;
		\item[$\mathrm{(ii)}$] $\rho_{E}(\lambda x)=|\lambda|\,\rho_{E}(x)$ for all $x\in E$ and $\lambda\in\mathbb{K}\setminus\{0\}$.
	\end{enumerate}
\end{dfn}

The notion of a gauge extends that of a norm and provides a general approach for studying the Ulam stability of operators; see~\cite{Brzdek-gauges,Brzdek1} for further details.

Let $E$ and $F$ be vector spaces over $\mathbb{K}$ equipped with gauges $\rho_{E}$ and $\rho_{F}$, respectively, and let $T:E\to F$ be a linear operator. 

\begin{dfn}[\cite{Miura}]\label{def-stability}
	We say that $T: E\to F$ is \emph{Ulam stable} (or \emph{Hyers--Ulam stable}) if there exists a constant $K\ge 0$ with the following property: for every $\epsilon >0$, every $b\in T(E)$, and every $y\in E$ satisfying $\rho_{F}(Ty -b) \le \epsilon$, there exists $y_0\in E$ such that $Ty_0 =b$ and $\rho_{E}(y-y_0)\le K\epsilon$.
\end{dfn}

An element $y\in E$ satisfying the inequality $\rho_{F}(Ty -b) \le \epsilon$ for some $\epsilon>0$ is called an \textit{approximate solution} of the equation $Tx=b$. Therefore, Definition~\ref{def-stability} can be roughly reformulated as follows: the operator $T$ is Ulam stable if every approximate solution of the equation $Tx=b$ is close to an exact solution. This kind of stability dates back to a 1940 talk by S.\,M. Ulam at the University of Wisconsin, in which he raised this question with regard to group homomorphisms:  \emph{must an approximate homomorphism lie near an exact one?} (see~\cite{Ulam}). A year later, D.\,H. Hyers gave a partial answer to Ulam's question by establishing the stability of Cauchy's additive equation in Banach spaces~\cite{Hyers}. Since then, the notion of Ulam stability has been extensively investigated and extended to numerous classes of equations (e.g., functional, differential, difference, and integral) in a wide range of structures and contexts (see, e.g.,~\cite{Aoki,Baker,Bourgin1,Boutarfass,Ciplinski,Jung-book,Gavruta,Hyers1,Rassias,Yuan}).

The study of the Ulam stability of linear operators was initiated in~\cite{Htori,Miura,Takagi} and has since been developed considerably, yielding results for broad classes of linear operators arising in various areas of mathematics; see~\cite{Brzdek1,Brzdek-gauges,Popa-Aequat,Popa-Appro}. 

Obłoza appears to have been the first author to investigate the Ulam stability of differential equations~\cite{Obloza,Obloza2}.
 A few years later, Alsina and Ger~\cite{Alsina} showed that the real-valued differential equation $y'=y$ is Ulam stable with Ulam constant $K=3$, a result that was soon refined and extended in several works (see~\cite{Miura-JMAA-2003,Onitsuka,Takahasi-Korean,Takahasi-2004}). The most significant advance, however, came with Miura, Miyajima, and Takahasi~\cite{Miura}, who characterized the Ulam stability of $n$-th order linear differential operators with constant coefficients. Since this characterization forms the starting point for our investigation, we recall it below.

Define the map
$\|\cdot\|_{\infty}:\mathcal{C}(\mathbb{R},X)\to[0,\infty]$
by
\[
\|f\|_{\infty}
:=
\sup_{t\in\mathbb{R}}\|f(t)\|,
\qquad
f\in\mathcal{C}(\mathbb{R},X).
\]
Then $\|\cdot\|_{\infty}$ is a gauge. We always assume that $\mathcal{C}(\mathbb{R}, X)$ and $\mathcal{C}^{n}(\mathbb{R}, X)$ are equipped with this gauge. As usual, $\operatorname{Re}(z)$ and $\operatorname{Im}(z)$ stand for the real and imaginary parts of a complex number $z$, respectively.

\begin{thm}[\cite{Miura}]
	\label{thm-Miura}
	The operator $L$ is Ulam stable if and only if its characteristic equation has no roots on the imaginary axis. In this case, for every $\varepsilon>0$, every $f\in\mathcal{C}(\mathbb{R},X)$, and every $y\in\mathcal{C}^{n}(\mathbb{R},X)$ satisfying $\|Ly -f\|_{\infty}\le\varepsilon$, there exists a unique $x\in\mathcal{C}^{n}(\mathbb{R},X)$ such that $Lx =f$ and
\begin{equation}
		\|y-x\|_{\infty}\le
	\frac{\varepsilon}{\displaystyle\prod_{i=1}^{n}|\operatorname{Re}(r_i)|},\label{const-Miura}
\end{equation}

	where $r_1,\ldots,r_n\in\mathbb{C}$ denote the roots of the characteristic equation of $L$.
\end{thm}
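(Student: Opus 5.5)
The natural approach is to factor $L$ into first-order operators and reduce everything to the scalar case $n=1$. Write the characteristic polynomial as $P(\lambda)=\prod_{i=1}^{n}(\lambda-r_i)$, so that $L=D_{r_1}\circ D_{r_2}\circ\cdots\circ D_{r_n}$ with $D_r y:=y'-ry$. The factors commute because the coefficients are constant. The proof then has three parts: first-order stability, iteration over the factors, and necessity.

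\textbf{First-order case.} Let $\operatorname{Re}(r)\neq0$ and $g\in\mathcal{C}(\mathbb{R},X)$. I claim that $z'-rz=g$ has exactly one bounded solution, and that it satisfies $\|z\|_\infty\le\|g\|_\infty/|\operatorname{Re}(r)|$.
\begin{itemize}
\item If $\operatorname{Re}(r)<0$, the solution is $z(t)=\int_{-\infty}^{t}e^{r(t-s)}g(s)\,ds$.
\item If $\operatorname{Re}(r)>0$, the solution is $z(t)=-\int_{t}^{\infty}e^{r(t-s)}g(s)\,ds$.
\end{itemize}
The integrals are Bochner integrals and converge absolutely since $g$ is bounded. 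The bound follows from $\int_0^\infty e^{-|\operatorname{Re} r|u}\,du=1/|\operatorname{Re} r|$. Uniqueness among bounded solutions holds because the homogeneous solutions $e^{rt}c$ are unbounded unless $c=0$.

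\textbf{Sufficiency by iteration.} Suppose $y\in\mathcal{C}^n$ and $\|Ly-f\|_\infty\le\varepsilon$. Set $h:=Ly-f$. For the general case $f\in\mathcal{C}(\mathbb{R},X)$, I first show that $L$ is surjective onto $\mathcal{C}(\mathbb{R},X)$. To do this, solve $D_{r_1}u_1=f$ on $\mathbb{R}$, then $D_{r_2}u_2=u_1$, and so on. A variation-of-constants formula works here even for unbounded $f$, for instance with base point $0$.

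The key step is to find a $w$ with $Lw=h$ and $\|w\|_\infty\le\varepsilon/\prod|\operatorname{Re} r_i|$. Apply the first-order result successively:
\begin{itemize}
\item let $w_1$ be the unique bounded solution of $D_{r_1}w_1=h$;
\item let $w_2$ be the unique bounded solution of $D_{r_2}w_2=w_1$;
\item continue in this way up to $w_n$.
\end{itemize}
Each $w_k$ is bounded with $\|w_k\|_\infty\le\|w_{k-1}\|_\infty/|\operatorname{Re} r_k|$. Put $w:=w_n$. Then $w\in\mathcal{C}^n$, since each step gains one derivative. Also $D_{r_1}\cdots D_{r_n}w=h$ by commutativity, taking the factors in the appropriate order. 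Now set $x:=y-w$. Then $Lx=f$ and $\|y-x\|_\infty=\|w\|_\infty$ satisfies \eqref{const-Miura}.

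For uniqueness, suppose $x_1$ and $x_2$ both satisfy $Lx_i=f$ and the bound. Then $x_1-x_2$ is a bounded solution of $Lz=0$. Applying the first-order uniqueness iteratively, with $z_k:=D_{r_{k+1}}\cdots D_{r_n}z$, forces $z=0$. The one point needing care is that each $z_k$ is bounded. Here I would use the structure of the solutions: they are sums of terms $t^je^{r_it}c_{ij}$, so boundedness forces all coefficients to vanish because no $\operatorname{Re}(r_i)$ is zero.

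\textbf{Necessity.} Suppose some root satisfies $r_k=i\beta$. Take the $\varepsilon$-approximate solutions $y_\varepsilon(t)=\varepsilon\, t\,e^{i\beta t}v$ of $Ly=0$, where $v\in X$ has $\|v\|=1$ (if $\mathbb{K}=\mathbb{R}$, use the corresponding real combinations). Then $Ly_\varepsilon$ is constant times $e^{i\beta t}$ and so bounded, after scaling, by $\varepsilon$. However, $y_\varepsilon$ minus any solution of $Ly=0$ is unbounded, so no constant $K$ exists.

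I expect the main obstacle to be handling multiple imaginary roots cleanly. A secondary difficulty in the real case is keeping the solutions real-valued when the $r_i$ are complex. For that I would complexify $X$ and take real parts, checking that the gauge estimate survives.
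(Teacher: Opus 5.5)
The sufficiency and uniqueness parts of your proposal are correct. Iterating the bounded first-order solution operators gives $w\in\mathcal{C}^{n}(\mathbb{R},X)$ with $Lw=Ly-f$ and $\|w\|_\infty\le\varepsilon/\prod_i|\operatorname{Re}(r_i)|$. Then $x:=y-w$ works directly, so your surjectivity detour is unnecessary. Uniqueness is exactly Lemma~\ref{lem4.1}, which makes your iterates $z_k$ superfluous.

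The paper does not prove this theorem; it quotes it from Miura et al. Its own Theorems~\ref{thm-gen-sol-hom} and~\ref{thm4.1} treat sufficiency with one explicit kernel for $L$. Your $w$ is $h$ convolved successively with the $n$ first-order kernels $G_1,\dots,G_n$, and $\|G_1*\cdots*G_n\|_{L^1}\le\prod_i\|G_i\|_{L^1}=\prod_i|\operatorname{Re}(r_i)|^{-1}$. The paper's $K_L$ is the $L^1$ norm of the composite kernel itself. That is why your route gives exactly the constant in \eqref{const-Miura}, while the paper's gives the sharper $K_L$.

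The genuine gap is in the necessity part, precisely in the case you flagged. If the imaginary root $\textbf{i}\beta$ has multiplicity $\mu\ge 2$, then $te^{\textbf{i}\beta t}v\in\ker L$. So $Ly_\varepsilon=0$, and $y_\varepsilon$ is itself an exact solution. Your claim that $y_\varepsilon$ minus every element of $\ker L$ is unbounded is then false, and no contradiction arises.

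The fix is to raise the power to the multiplicity. Write $P(\lambda)=(\lambda-\textbf{i}\beta)^{\mu}Q(\lambda)$ with $Q(\textbf{i}\beta)\neq 0$. The identity
\[
P(D)\bigl(t^{\mu}e^{\lambda t}\bigr)=\partial_\lambda^{\mu}\bigl(P(\lambda)e^{\lambda t}\bigr)=\sum_{j=0}^{\mu}\binom{\mu}{j}P^{(j)}(\lambda)\,t^{\mu-j}e^{\lambda t}
\]
gives $L(t^{\mu}e^{\textbf{i}\beta t}v)=\mu!\,Q(\textbf{i}\beta)\,e^{\textbf{i}\beta t}v$. Hence $y:=\varepsilon\, t^{\mu}e^{\textbf{i}\beta t}v/(\mu!\,|Q(\textbf{i}\beta)|)$ satisfies $\|Ly\|_\infty=\varepsilon$.

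You also need a lemma you only asserted. Let $\sum_{l,j}t^{j}e^{\rho_l t}c_{l,j}$, with distinct $\rho_l$, be an $X$-valued exponential polynomial that is bounded on $\mathbb{R}$. Then $c_{l,j}=0$ whenever $\operatorname{Re}(\rho_l)\neq 0$ or $j\ge 1$. To prove it:
\begin{enumerate}
\item[(a)] reduce to scalars with functionals;
\item[(b)] divide by the dominant factor $t^{J}e^{\sigma t}$ as $t\to\pm\infty$;
\item[(c)] use that a trigonometric polynomial with distinct frequencies tending to $0$ at infinity vanishes.
\end{enumerate}
Elements of $\ker L$ contribute only terms $t^{j}e^{\textbf{i}\beta t}$ with $j<\mu$. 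So for every $x\in\ker L$, the coefficient of $t^{\mu}e^{\textbf{i}\beta t}$ in $y-x$ is a nonzero multiple of $v$, and $y-x$ is unbounded.

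The same lemma, used only for $\operatorname{Re}(\rho_l)\neq 0$, is what your uniqueness step rests on. For $\mathbb{K}=\mathbb{R}$, use $t^{\mu}\cos(\beta t)\,v$ instead (this is $t^{\mu}v$ when $\beta=0$), normalized the same way, and run the argument through the complexification.
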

\noindent Generalizations of Theorem~\ref{thm-Miura} can be found in~\cite{Buica,Popa,Popa-Comput,Jung}; for related Ulam stability results on other classes of differential and partial differential equations and operators, we refer to~\cite{Brzdek1,Jung2009,Lungu,Novac}.

The constant $K$ appearing in Definition~\ref{def-stability} is called an \emph{Ulam constant} of $T$. In general, the infimum of the set of all Ulam constants of $T$ need not itself be an Ulam constant (see~\cite{Brzdek1,Popa-Aequat}). When this infimum is itself an Ulam constant, it is called the \emph{best Ulam constant} or simply the \emph{Ulam constant} of $T$, denoted $K_{T}$. To the best of our knowledge, Rassias and Tabor~\cite{Rassias-best} were the first to raise the question of the best Ulam constant for certain functional equations. This question was subsequently taken up by numerous specialists for other classes of equations, as well as for operators (see, e.g.,~\cite{Baias-RACSAM,Baias-Bull,Brzdek1,Jung-book,Miura-JMAA-2003,Onitsuka,Onitsuka-Comput,Popa-JMAA,Popa-Aequat,Takahasi-Korean,Yuan}).

 It is worth noting that determining the best Ulam constant of an operator or an equation is an interesting problem, as it provides the sharpest possible measure for the distance between an approximate solution and the nearest exact solution. Such information is important in practice, since many real-world problems cannot be solved exactly and are instead treated using approximate methods, such as numerical methods. Therefore, knowing how far the resulting approximate solutions may differ from exact ones is essential.

For the operator $L$, the best Ulam constant has been determined only in a few special cases. The simplest such case is $n=1$, where $L$ takes the form $Ly=y'+\lambda y$ for some $\lambda\in\mathbb{K}$; here $L$ is Ulam stable if and only if $\operatorname{Re}(\lambda)\neq0$, with best Ulam constant $K_L=|\operatorname{Re}(\lambda)|^{-1}$~\cite{Miura-JMAA-2003,Onitsuka,Takahasi-Korean}. This constant was determined for $n=2$ in~\cite{Baias-RACSAM} and for $n=3$ in~\cite{Baias-Axioms}. Related results for first- and second-order linear differential equations with certain classes of variable coefficients can be found in~\cite{Anderson,Baias-2026,Fukutaka,Fukutaka-Hillseq}. For general $n>1$, a formula for it has been obtained in~\cite{Baias-Mathematics}, but only under the restrictive assumption that all roots of the characteristic equation are simple (i.e., have multiplicity one). The results in~\cite{Baias-RACSAM,Baias-Axioms} show that the constant in~\eqref{const-Miura} is not optimal in general (see also Corollary~\ref{cor4.1} at the end of this paper).

The main objective of this paper is to complete and unify the aforementioned results from~\cite{Baias-RACSAM,Baias-Mathematics,Baias-Axioms,Miura-JMAA-2003,Onitsuka,Takahasi-Korean}. Namely, we determine the best Ulam constant of $L$ for an arbitrary order $n$, without any restriction on the multiplicities of the roots of the characteristic equation. Moreover, we derive several results on the best Ulam constant for important special cases of $L$. As an auxiliary result, we also provide, in the next section, an explicit formula for the general solution of the nonhomogeneous equation $Ly=f$ ($f\in\mathcal{C}(\mathbb{R},X)$).



\section{Auxiliary results }\label{sect3}
	\hspace*{3mm}
In this section, we derive an explicit formula for the general solution of the nonhomogeneous equation
\begin{equation}
	\label{non-hom}
	Ly = f,
\end{equation}
where $f$ is a fixed function in $\mathcal{C}(\mathbb{R},X)$. This formula will be used in the proof of our main results, presented in the next section.

We first recall the structure of the solution set of the homogeneous equation $Ly=0$. For a positive integer $k$, we write $\mathbb{N}_{k}:=\{1,2,\ldots,k\}$. Let $r_1,\ldots,r_m$ ($m\in\mathbb{N}_{n}$) be the pairwise distinct complex roots of the characteristic equation
\begin{equation}
	r^{n}+a_1r^{n-1}+\cdots+a_n=0,\label{char-equ}
\end{equation}
with respective multiplicities $n_1,\ldots,n_m$, so that $\sum_{k=1}^{m}n_k=n$, and set

$$
\widehat{q}(0)=0,\qquad \widehat{q}(l)=\sum_{j=1}^{l}n_j,\qquad l\in\mathbb{N}_{m-1}.
$$

It is well known (see, e.g., \cite{Coddington2,Miura}) that, in the case $\mathbb{K}=\mathbb{C}$, the $n$ functions
\begin{equation}\label{set-sol}
	x_{\widehat{q}(i-1)+j}(t)=t^{j-1}e^{r_it}, \qquad i\in\mathbb{N}_m,\ j\in\mathbb{N}_{n_i},
\end{equation}
form a fundamental set of solutions of $Ly=0$. Thus, in this case, every $x\in\ker L$ can be written as
\begin{equation}\label{sol-hom-eq}
	x(t)=\sum_{i=1}^{m}\sum_{j=1}^{n_i}t^{j-1}e^{r_it}c_{\widehat{q}(i-1)+j}, \qquad t\in\mathbb{R},
\end{equation}
for some constants $c_1,\ldots,c_n\in X$; see \cite[Lemma 2.7]{Miura}.

The general solution of the nonhomogeneous equation~\eqref{non-hom} can be expressed as the sum of the general solution of the equation $Ly=0$ and a particular solution of~\eqref{non-hom}. In the following theorem, we use the method of variation of parameters to derive a particular solution of~\eqref{non-hom} and thereby obtain an explicit formula for the general solution of~\eqref{non-hom}, which will be useful in what follows. Since we are unaware of any published form of such a result, and given its crucial role in the proof of our main results, we include a proof of it for completeness.

\begin{thm}
	\label{thm-gen-sol-hom}
	Let $\mathbb{K}=\mathbb{C}$. Then $y\in \mathcal{C}^{n}(\mathbb{R},X)$ is a solution of~\eqref{non-hom} if and only if there exist constants $c_{1},\ldots,c_{n}\in X$ such that
	\begin{equation}\label{gen-sol-nonhom}
	y(t)=
	\sum_{i=1}^{m}\sum_{j=1}^{n_i}
	t^{j-1}e^{r_it}c_{\widehat{q}(i-1)+j}
	+
	\sum_{i=1}^{m}\sum_{k=1}^{n_i}
	\frac{\phi_i^{(n_i-k)}(r_i)}
	{(k-1)!(n_i-k)!}
	\int_{0}^{t}
	(t-\tau)^{k-1}e^{r_i(t-\tau)}f(\tau)\,d\tau
	\end{equation}
	for $t\in\mathbb{R}$, where, for each $i\in\mathbb{N}_m$,
	\begin{equation}
		\label{Qi}
		\phi_i(r)=
		\begin{cases}
			\displaystyle
			\prod_{l=1,
				\, l\neq i}^{m}
			(r-r_l)^{-n_l},
			& m>1,\\[3mm]
			1,
			& m=1,
		\end{cases}
	\end{equation}
	for every $r\in\mathbb{C}\setminus\{r_l:\ l\in\mathbb{N}_{m}\setminus\{i\}\}$.
\end{thm}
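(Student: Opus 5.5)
Since $\ker L$ is described by \eqref{sol-hom-eq}, every solution of \eqref{non-hom} is a particular solution plus an element of $\ker L$, and conversely. The proof therefore reduces to one claim: the function
\[
y_p(t)=\sum_{i=1}^{m}\sum_{k=1}^{n_i}\frac{\phi_i^{(n_i-k)}(r_i)}{(k-1)!(n_i-k)!}\int_0^t(t-\tau)^{k-1}e^{r_i(t-\tau)}f(\tau)\,d\tau
\]
belongs to $\mathcal{C}^n(\mathbb{R},X)$ and satisfies $Ly_p=f$. I would prove this with the Green kernel, i.e.\ a Duhamel-type argument, rather than by literally solving the Wronskian system of variation of parameters.

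\textbf{Step 1: $y_p$ is a convolution with a kernel in $\ker L$.} Write $y_p(t)=\int_0^t g(t-\tau)f(\tau)\,d\tau$ with the scalar kernel
\[
g(s)=\sum_{i=1}^m\sum_{k=1}^{n_i}\frac{\phi_i^{(n_i-k)}(r_i)}{(k-1)!(n_i-k)!}\,s^{k-1}e^{r_is}.
\]
Because $g$ is a linear combination of the functions \eqref{set-sol}, it satisfies $Lg=0$.

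\textbf{Step 2: the initial conditions of $g$.} I claim that
\[
g(0)=\dots=g^{(n-2)}(0)=0,\qquad g^{(n-1)}(0)=1.
\]
To see this, note that by the residue formula at a pole of order $n_i$,
\[
g(s)=\sum_{i=1}^m\operatorname{Res}_{r=r_i}\frac{e^{rs}}{P(r)},
\]
where $P(r)=\prod_l(r-r_l)^{n_l}$ is the characteristic polynomial. Indeed,
\[
\frac{1}{(n_i-1)!}\frac{d^{n_i-1}}{dr^{n_i-1}}\bigl[\phi_i(r)e^{rs}\bigr]_{r=r_i}
=\sum_{k}\frac{\phi_i^{(n_i-k)}(r_i)\,s^{k-1}e^{r_is}}{(n_i-k)!\,(k-1)!}
\]
by the Leibniz rule. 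Differentiating in $s$ gives
\[
g^{(j)}(0)=\sum_i\operatorname{Res}_{r=r_i}\frac{r^j}{P(r)}=\frac{1}{2\pi i}\oint_{|r|=R}\frac{r^j}{P(r)}\,dr
\]
for large $R$. Since $\deg P=n$, this equals $0$ for $j\le n-2$ and $1$ for $j=n-1$; the latter follows by expanding at infinity, or by letting $R\to\infty$. The case $m=1$ fits the same formula with $\phi_1\equiv1$. This step is the heart of the proof. The main obstacle is checking the Leibniz-rule bookkeeping so that the coefficients match \eqref{gen-sol-nonhom} exactly.

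\textbf{Step 3: differentiating under the integral.} For $X$-valued continuous $f$, the Leibniz rule for Riemann integrals (which is valid in Banach spaces) gives, for $j\le n-1$,
\[
y_p^{(j)}(t)=\int_0^t g^{(j)}(t-\tau)f(\tau)\,d\tau ,
\]
because the boundary terms $g^{(j-1)}(0)f(t)$ vanish. At the last step,
\[
y_p^{(n)}(t)=g^{(n-1)}(0)f(t)+\int_0^t g^{(n)}(t-\tau)f(\tau)\,d\tau .
\]
Hence $y_p\in\mathcal{C}^n$, and
\[
Ly_p(t)=f(t)+\int_0^t (Lg)(t-\tau)f(\tau)\,d\tau=f(t).
\]

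\textbf{Step 4: conclusion.} Given any solution $y$, we have $L(y-y_p)=0$, so $y-y_p$ has the form \eqref{sol-hom-eq} by \cite[Lemma 2.7]{Miura}. Conversely, each function of the form \eqref{gen-sol-nonhom} solves \eqref{non-hom} by linearity of $L$.
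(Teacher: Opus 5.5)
Your proof is correct, and it takes a different route from the paper's.

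\textbf{The paper's route.} The paper carries out variation of parameters literally. It factors the Wronskian as $W(t)=VU(t)^{-1}$, with $U(t)$ an explicit upper triangular matrix and $V$ the constant confluent Vandermonde matrix. It then quotes from the literature that the last column of $V^{-1}$ has entries $\phi_i^{(n_i-j)}(r_i)/(n_i-j)!$, integrates the resulting $c_j'(t)$, and resums with the binomial theorem to reach the convolution form. That $y_p$ actually solves $Ly=f$ is inherited from the general variation-of-parameters principle.

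\textbf{Your route.} You identify the kernel as $g(s)=\sum_i\operatorname{Res}_{r=r_i}e^{rs}/P(r)$. You read off its initial data $g^{(j)}(0)=\delta_{j,n-1}$ from the contour integral of $r^j/P(r)$ at infinity. You then check $y_p\in\mathcal{C}^n(\mathbb{R},X)$ and $Ly_p=f$ directly by differentiating the convolution. If you expand $\operatorname{Res}_{r=r_i}r^j/P(r)$ with the same Leibniz rule, your identity $\sum_i\operatorname{Res}_{r=r_i}r^j/P(r)=\delta_{j,n-1}$, for $0\le j\le n-1$, is exactly the statement that $V$ maps the quoted column to $(0,\ldots,0,1)^{T}$. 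So your argument in effect proves the cited inverse-confluent-Vandermonde formula instead of importing it.

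\textbf{What each approach buys.} Your proof is shorter and self-contained, and it never touches the Wronskian. It also makes transparent that the kernels $\varphi_2(s)$ and $\varphi_1(s)$ in the best-constant formula are the residue sums of $e^{rs}/P(r)$ and $e^{-rs}/P(r)$ over the roots with negative and positive real parts, respectively. The paper's computation gives in addition the explicit inverse $W(t)^{-1}=U(t)V^{-1}$ and the coefficient functions $c_{\widehat{q}(i-1)+j}(t)$. Their limits as $t\to+\infty$ (for $i\in I_1$) or $t\to-\infty$ (for $i\in I_2$) are precisely the constants $c^{*}_{\widehat{q}(i-1)+j}$ of \eqref{define-constants-c} used in Theorem~\ref{thm4.1}. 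In your framework those constants would simply be written down directly, with the binomial expansion of $(t-\tau)^{k-1}$, as in \eqref{Exp-I1}, matching them to the kernel.
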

\begin{proof}
By the method of variation of parameters (see, e.g., \cite{Coddington2}), the nonhomogeneous equation~\eqref{non-hom} admits a particular solution of the form
	\begin{equation}
		\label{solp}
		y_p(t)
		=
		\sum_{i=1}^{m}\sum_{j=1}^{n_i}
		t^{j-1}e^{r_i t}
		c_{\widehat{q}(i-1)+j}(t),
		\qquad t\in\mathbb{R},
	\end{equation}
	where $c_{1},\ldots,c_{n}\in \mathcal{C}^{1}(\mathbb{R},X)$ satisfy
	\begin{equation}
		\label{mat-equ}
		W(t)\bigl(c_{1}'(t),\ldots,c_{n}'(t)\bigr)^{T}
		=
		\bigl(0,\ldots,0,f(t)\bigr)^{T},
		\qquad t\in\mathbb{R},
	\end{equation}
	and $W(t)$ denotes the fundamental matrix associated with the fundamental set of solutions defined by~\eqref{set-sol}, that is, the $n\times n$ matrix whose entries are given by
	\[
	w_{i,\widehat{q}(k-1)+j}(t)
	:=
	\frac{d^{i-1}}{dt^{i-1}}x_{\widehat{q}(k-1)+j}(t)
	=
	\frac{d^{i-1}}{dt^{i-1}}
	\Bigl(t^{j-1}e^{r_{k}t}\Bigr),
	\qquad t\in\mathbb{R},
	\]
	for $i\in\mathbb{N}_{n}$, $k\in\mathbb{N}_{m}$, and $j\in\mathbb{N}_{n_{k}}$.
	
	We now proceed to solve the matrix equation~\eqref{mat-equ}. To this end, let $U(t)$ be the upper triangular $n\times n$ matrix with entries
	\begin{equation}
		\label{defU}
		u_{\widehat{q}(k-1)+i,\,\widehat{q}(l-1)+j}(t):=
		\begin{cases}
			\displaystyle
			\frac{(-t)^{j-i}e^{-r_k t}}{(i-1)!(j-i)!},
			& i\le j,\ k=l,\\[3mm]
			0,
			& \text{otherwise},
		\end{cases}
	\end{equation}
	for $k,l\in\mathbb{N}_{m}$, $i\in\mathbb{N}_{n_{k}}$, and $j\in\mathbb{N}_{n_{l}}$, and define the $n\times n$ matrix
	\begin{equation}
		\label{DefV}
		V(t):=W(t)U(t), \qquad t\in\mathbb{R}.
	\end{equation}
	
	We first compute the entries of $V(t)$. Let $i\in\mathbb{N}_{n}$, $k\in\mathbb{N}_{m}$, and $j\in\mathbb{N}_{n_k}$. Then
	\[
	v_{i,\widehat{q}(k-1)+j}(t)
	=
	\sum_{l=1}^{n}
	w_{i,l}(t)\,u_{l,\widehat{q}(k-1)+j}(t),
	\qquad t\in\mathbb{R}.
	\]
	Since, by~\eqref{defU},
	\[
	u_{l,\widehat{q}(k-1)+j}(t)=0,
	\qquad t\in\mathbb{R},
	\]
	for every $l\in\mathbb{N}_{n}\setminus\{\widehat{q}(k-1)+1,\ldots,\widehat{q}(k-1)+j\}$, it follows that
	\begin{align}
		v_{i,\widehat{q}(k-1)+j}(t)&= \sum_{l=\widehat{q}(k-1)+1}^{\widehat{q}(k-1)+j}
		w_{i,l}(t)\,u_{l,\widehat{q}(k-1)+j}(t) \nonumber\\[2mm]
		&= \sum_{l=1}^{j}
		w_{i,\widehat{q}(k-1)+l}(t)\,u_{\widehat{q}(k-1)+l,\widehat{q}(k-1)+j}(t)\nonumber \\[2mm]
		&= \sum_{l=1}^{j}\frac{d^{i-1}}{dt^{i-1}}
		\Bigl(t^{l-1}e^{r_{k}t}\Bigr)\frac{(-t)^{j-l}e^{-r_{k} t}}{(l-1)!(j-l)!} \nonumber \\[2mm]
		&=	\sum_{l=1}^{j}\sum_{s=0}^{\min(i-1, l-1)}\binom{i-1}{s}\frac{t^{l-1-s} }{(l-1-s)!}\, r_{k}^{i-1-s}\frac{(-t)^{j-l}}{(j-l)!}	
		\nonumber	\\[2mm]
		&=	\sum_{l=1}^{j}\sum_{s=1}^{\min(i, l)}\binom{i-1}{s-1}\frac{(-1)^{j-l} }{(j-l)!(l-s)!}\, r_{k}^{i-s}\, t^{j-s}
		\nonumber	\\[2mm]
		&=	\sum_{s=1}^{\min(i, j)}\binom{i-1}{s-1}r_{k}^{i-s}\, t^{j-s}\sum_{l=s}^{j}\frac{(-1)^{j-l} }{(j-l)!(l-s)!}, \qquad t\in \mathbb{R}. \label{comput}
	\end{align}
	Note that, for every $s\in\mathbb{N}_j$,
	\begin{equation}
		\label{Kronker}
		\sum_{l=s}^{j}\frac{(-1)^{j-l}}{(j-l)!(l-s)!}
		=
		\frac{1}{(j-s)!}
		\sum_{l=0}^{j-s}\binom{j-s}{l}(-1)^{j-s-l}
		=
		\delta_{s,j},
	\end{equation}
	where $\delta_{s,j}$ denotes the Kronecker delta, i.e., $\delta_{s,j}=1$ if $s=j$ and $\delta_{s,j}=0$ otherwise. Combining~\eqref{comput} and~\eqref{Kronker}, we obtain
	\begin{equation*}
		v_{i, \widehat{q}(k-1)+j}(t)=	\sum_{s=1}^{\min(i, j)}\binom{i-1}{s-1}r_{k}^{i-s}\, t^{j-s}\, \delta_{s,j}=\begin{cases}
			\binom{i-1}{j-1}r_{k}^{i-j}, & j\le i,
			\\[2mm]	
			0, & j> i.
		\end{cases}
	\end{equation*}
	Thus, the matrix $V(t)$ does not depend on $t$, and we denote it simply by $V:=V(t)$, $t\in\mathbb{R}$. This matrix is called the \emph{confluent Vandermonde matrix}, and its determinant is given by
	\[
	\det V=
	\begin{cases}
		\displaystyle
		\prod_{1\le k<l\le m}(r_l-r_k)^{n_k n_l},
		& m>1,\\[2mm]
		1,
		& m=1.
	\end{cases}
	\]
	Since $r_1,\ldots,r_m$ are pairwise distinct, $\det(V)\neq 0$, and hence $V$ is invertible. Furthermore, it is well known (see, e.g., \cite{Yuan,Mouçouf}) that the entries of the $n$-th column of $V^{-1}$ are given by
	\begin{equation}
		\label{InverseV}
		v^{*}_{\widehat{q}(i-1)+j,\,n}
		=
		\frac{\phi_i^{(n_i-j)}(r_i)}{(n_i-j)!},
		\qquad
		i\in\mathbb{N}_m,\quad
		j\in\mathbb{N}_{n_i},
	\end{equation}
	where $\phi_i$ is defined by~\eqref{Qi}.
	
Observe also that
\[
\det U(t)=\prod_{k=1}^{m}\prod_{i=1}^{n_k}u_{\widehat{q}(k-1)+i,\,\widehat{q}(k-1)+i}(t)=\prod_{k=1}^{m}\prod_{i=1}^{n_k}\frac{e^{-r_{k}t}}{(i-1)!}, \qquad t\in\mathbb{R},
\]
which shows that $U(t)$ is invertible for every $t\in\mathbb{R}$. Consequently, by~\eqref{DefV}, $W(t)$ is invertible as well, with
\begin{equation}
	\label{Inv-W}
	W(t)^{-1}=U(t)V^{-1}, \qquad t\in\mathbb{R}.
\end{equation}
	
	We now return to~\eqref{mat-equ}. Combining it with~\eqref{Inv-W} yields
	\begin{align*}
		\bigl(c_{1}'(t),\ldots,c_{n}'(t)\bigr)^{T}
		&=
		W(t)^{-1}
		\bigl(0,\ldots,0,f(t)\bigr)^{T}
		\\
		&=
		U(t)V^{-1}
		\bigl(0,\ldots,0,f(t)\bigr)^{T}
		\\
		&=
		U(t)
		\bigl(v^{*}_{1,n},\ldots,v^{*}_{n,n}\bigr)^{T}
		f(t),
		\qquad t\in\mathbb{R}.
	\end{align*}
	Hence,
	\[
	c_{\widehat{q}(i-1)+j}'(t)
	=
	\sum_{k=1}^{n}
	u_{\widehat{q}(i-1)+j,k}(t)\,
	v^{*}_{k,n}\,
	f(t),
	\qquad
	t\in\mathbb{R},
	\]
	for every $i\in\mathbb{N}_{m}$ and $j\in\mathbb{N}_{n_i}$. Since
	\[
	u_{\widehat{q}(i-1)+j,k}(t)=0,
	\qquad
	t\in\mathbb{R},
	\]
	whenever $k\notin\{\widehat{q}(i-1)+j,\ldots,\widehat{q}(i-1)+n_i\}$, it follows from~\eqref{defU} and~\eqref{InverseV} that
	\begin{align*}
		c_{\widehat{q}(i-1)+j}'(t)
		&=
		\sum_{k=j}^{n_i}
		u_{\widehat{q}(i-1)+j,\widehat{q}(i-1)+k}(t)\,
		v^{*}_{\widehat{q}(i-1)+k,n}\,
		f(t)
		\\[2mm]
		&=
		\frac{1}{(j-1)!}
		\sum_{k=j}^{n_i}
		\frac{\phi_i^{(n_i-k)}(r_i)}
		{(k-j)!(n_i-k)!}
		(-t)^{k-j}e^{-r_it}\,
		f(t),
		\qquad
		t\in\mathbb{R},
	\end{align*}
	for $i\in\mathbb{N}_{m}$ and $j\in\mathbb{N}_{n_i}$, and therefore
	\[
	c_{\widehat{q}(i-1)+j}(t)
	=
	\frac{1}{(j-1)!}
	\sum_{k=j}^{n_i}
	\frac{\phi_i^{(n_i-k)}(r_i)}{(k-j)!(n_i-k)!}
	\int_{0}^{t}
	(-\tau)^{k-j}e^{-r_i\tau}f(\tau)\,d\tau
	+u,
	\qquad t\in\mathbb{R},
	\]
	for $i\in\mathbb{N}_{m}$ and $j\in\mathbb{N}_{n_i}$, where $u$ is an arbitrary constant in $X$. Choosing $u=0$, we conclude that equation~\eqref{non-hom} admits the particular solution~\eqref{solp}, with
	\[
	c_{\widehat{q}(i-1)+j}(t)
	=
	\frac{1}{(j-1)!}
	\sum_{k=j}^{n_i}
	\frac{\phi_i^{(n_i-k)}(r_i)}{(k-j)!(n_i-k)!}
	\int_{0}^{t}
	(-\tau)^{k-j}e^{-r_i\tau}f(\tau)\,d\tau,
	\qquad t\in\mathbb{R},
	\]
	for $i\in\mathbb{N}_{m}$ and $j\in\mathbb{N}_{n_i}$.
	
	We now compute $y_p(t)$ explicitly:
	\begin{align}
		\label{comput-yp}
		y_p (t)	&= \sum_{i=1}^{m}\sum_{j=1}^{n_i}
		t^{j-1}e^{r_it}c_{\widehat{q}(i-1)+j}(t)
		\nonumber\\
		& =
		\sum_{i=1}^{m}\sum_{j=1}^{n_i}
		t^{j-1}e^{r_it}\frac{1}{(j-1)!}
		\sum_{k=j}^{n_i}
		\frac{\phi_i^{(n_i-k)}(r_i)}
		{(k-j)!(n_i-k)!}
		\int_{0}^{t}
		(-\tau)^{k-j}e^{-r_i\tau}f(\tau)\,d\tau
		\nonumber\\
		& =
		\sum_{i=1}^{m}\sum_{k=1}^{n_i}
		\frac{\phi_i^{(n_i-k)}(r_i)}
		{(n_i-k)!}
		\int_{0}^{t}
		\sum_{j=1}^{k}
		\frac{t^{j-1}(-\tau)^{k-j}}
		{(j-1)!(k-j)!}
		e^{r_i(t-\tau)}f(\tau)\,d\tau
		\nonumber\\
		& =
		\sum_{i=1}^{m}\sum_{k=1}^{n_i}
		\frac{\phi_i^{(n_i-k)}(r_i)}
		{(k-1)!(n_i-k)!}
		\int_{0}^{t}
		\sum_{j=0}^{k-1}
		\binom{k-1}{j}
		t^{j}(-\tau)^{k-1-j}
		e^{r_i(t-\tau)}f(\tau)\,d\tau
		\nonumber\\
		& =
		\sum_{i=1}^{m}\sum_{k=1}^{n_i}
		\frac{\phi_i^{(n_i -k)}(r_i)}
		{(k-1)!(n_i-k)!}
		\int_{0}^{t}
		(t-\tau)^{k-1}e^{r_i(t-\tau)}f(\tau)\,d\tau,
		\qquad t\in\mathbb{R}.
	\end{align}
	
	Finally, since the general solution of the nonhomogeneous equation~\eqref{non-hom} is given by $y=y_h+y_p$, where $y_h$ denotes the general solution of the homogeneous equation $Ly=0$, it follows from~\eqref{sol-hom-eq} and~\eqref{comput-yp} that $y\in \mathcal{C}^{n}(\mathbb{R},X)$ is a solution of~\eqref{non-hom} if and only if it is of the form~\eqref{gen-sol-nonhom} for some $c_1, \ldots,c_n \in X$.
\end{proof}

Although we shall only need Theorem~\ref{thm-gen-sol-hom} in the proof of our main results, we find it useful to conclude this section by giving an explicit description of the general solution of~\eqref{non-hom} in the real case. This description will be obtained from Theorem~\ref{thm-gen-sol-hom} via the complexification of a real vector space.

Let $\mathbb{K}=\mathbb{R}$. The \emph{Taylor complexification} (or simply the \emph{complexification}) of $X$ (see, e.g.,~\cite{Kadison,Muñoz}) is the complex Banach space $X_{\mathbb{C}}:=X\times X$, equipped with the following vector space operations and norm:
\begin{align}
	(a,b)+(c,d)
	&=(a+c,b+d),\nonumber
	\\
	(\alpha+i\beta)(a,b)
	&=(\alpha a-\beta b,\beta a+\alpha b),\label{mult-complex}
	\\
	\|(a,b)\|_{T}
	&=
	\sup_{0\le\theta\le2\pi}
	\|(\cos\theta)a-(\sin\theta)b\|,\nonumber
\end{align}
for all $a,b,c,d\in X$ and $\alpha,\beta\in\mathbb{R}$. The norm $\|\cdot\|_T$, called the \emph{Taylor norm}, satisfies
\begin{equation}
	\label{Tay-nor-ineq}
	\max\{\|u\|,\|v\|\}
	\le
	\|(u,v)\|_{T}
	\le
	\|u\|+\|v\|,
	\qquad
	u,v\in X.
\end{equation}
The gauge associated with $\|\cdot\|_{T}$ will be denoted by $\|\cdot\|_{T,\infty}$, i.e.,
\[
\|y\|_{T,\infty}=\sup_{t\in\mathbb{R}}\|y(t)\|_{T}, \qquad y\in\mathcal{C}(\mathbb{R},X_{\mathbb{C}}).
\]
Let $\widetilde{L}\colon\mathcal{C}^{n}(\mathbb{R},X_{\mathbb{C}})\to\mathcal{C}(\mathbb{R},X_{\mathbb{C}})$ be the linear differential operator defined by
\begin{equation*}
	\widetilde{L}y
	=
	y^{(n)}
	+a_{1}y^{(n-1)}
	+\cdots
	+a_{n}y,
	\qquad
	y\in\mathcal{C}^{n}(\mathbb{R},X_{\mathbb{C}}),
\end{equation*}
which we call the \emph{complexification of $L$}. Now let $y_1,y_2\in\mathcal{C}^{n}(\mathbb{R},X)$ and $f_1,f_2\in\mathcal{C}(\mathbb{R},X)$, and set $\widetilde{y}:=(y_1,y_2)$ and $\widetilde{f}:=(f_1,f_2)$. Since $a_1,\ldots,a_n\in\mathbb{R}$, it follows from~\eqref{mult-complex} that $\widetilde{L}\widetilde{y}=\widetilde{f}$ if and only if $Ly_1=f_1$ and $Ly_2=f_2$. In particular, $\widetilde{y}\in\ker\widetilde{L}$ if and only if $y_1,y_2\in\ker L$.

\begin{cor}
	Let $\mathbb{K}=\mathbb{R}$. Then the following assertions hold.
	\begin{enumerate}
		\item[$\mathrm{(i)}$] A function $y\in\mathcal{C}^{n}(\mathbb{R},X)$ is a solution of~\eqref{non-hom} if and only if
		\begin{equation}\label{gen-sol-realcase}
			\begin{aligned}
				y(t) = {}& \sum_{i=1}^{m}\sum_{j=1}^{n_i} t^{j-1} e^{\operatorname{Re}(r_i)t}
				\left(\cos(\operatorname{Im}(r_i)t)\, c_{\widehat{q}(i-1)+j} - \sin(\operatorname{Im}(r_i)t)\, d_{\widehat{q}(i-1)+j}\right) \\[2mm]
				& + \sum_{i=1}^{m}\sum_{k=1}^{n_i}
				\frac{1}{(k-1)!\,(n_i-k)!}
				\int_{0}^{t} (t-\tau)^{k-1} e^{\operatorname{Re}(r_i)(t-\tau)}\, A_{i,k}(t-\tau)\, f(\tau)\, d\tau,
			\end{aligned}
		\end{equation}
		where
	\begin{equation}\label{A_i,k}
		A_{i,k}(s):=\operatorname{Re}\bigl(\phi_{i}^{(n_i-k)}(r_i)\bigr)\cos(\operatorname{Im}(r_i)s) - \operatorname{Im}\bigl(\phi_{i}^{(n_i-k)}(r_i)\bigr)\sin(\operatorname{Im}(r_i)s), \qquad s\in\mathbb{R},
	\end{equation}
		for all $t\in\mathbb{R}$ and some $(c_1,d_1),\ldots,(c_n,d_n)\in X_{\mathbb{C}}$.
		
		\item[$\mathrm{(ii)}$] If $r_i\in\mathbb{R}$ for all $i\in\mathbb{N}_{m}$, then a function $y\in\mathcal{C}^{n}(\mathbb{R},X)$ is a solution of~\eqref{non-hom} if and only if it has the form~\eqref{gen-sol-nonhom} for some $c_1,\ldots,c_n\in X$.
	\end{enumerate}
\end{cor}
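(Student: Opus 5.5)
The plan is to reduce both assertions to Theorem~\ref{thm-gen-sol-hom} applied to the complexification $\widetilde{L}$ on $X_{\mathbb{C}}$.

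For (i), let $y\in\mathcal{C}^{n}(\mathbb{R},X)$ and set $\widetilde{y}:=(y,0)$ and $\widetilde{f}:=(f,0)$. By the remark preceding the corollary, $Ly=f$ holds if and only if $\widetilde{L}\widetilde{y}=\widetilde{f}$. Since $\widetilde{L}$ has the same characteristic equation and $X_{\mathbb{C}}$ is a complex Banach space, Theorem~\ref{thm-gen-sol-hom} applies. Hence $\widetilde{L}\widetilde{y}=\widetilde{f}$ if and only if $\widetilde{y}$ has the form~\eqref{gen-sol-nonhom}, with constants $\widetilde{c}_k=(c_k,d_k)\in X_{\mathbb{C}}$ and with $\widetilde f$ in place of $f$. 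It remains to take the first component of both sides.

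By~\eqref{mult-complex}, for $z=\alpha+i\beta$ we have
\[
\operatorname{pr}_1\bigl(z(c,d)\bigr)=\alpha c-\beta d, \qquad z(a,0)=(\alpha a,\beta a).
\]
This gives the following.
\begin{itemize}
\item For the homogeneous part, use $e^{r_it}=e^{\operatorname{Re}(r_i)t}\bigl(\cos(\operatorname{Im}(r_i)t)+i\sin(\operatorname{Im}(r_i)t)\bigr)$ and $t^{j-1}\in\mathbb{R}$. The first component is then exactly the first double sum in~\eqref{gen-sol-realcase}.
\item For the integral part, the scalar multiplying $\widetilde f(\tau)=(f(\tau),0)$ is $\phi_i^{(n_i-k)}(r_i)\,e^{r_i(t-\tau)}(t-\tau)^{k-1}/((k-1)!(n_i-k)!)$. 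Its real part is $(t-\tau)^{k-1}e^{\operatorname{Re}(r_i)(t-\tau)}A_{i,k}(t-\tau)/((k-1)!(n_i-k)!)$, by the product formula for real parts, which is exactly~\eqref{A_i,k}.
\item Integration commutes with the continuous real-linear projection $\operatorname{pr}_1$, so the first component of the integral is the integral of the first components. This yields the second sum in~\eqref{gen-sol-realcase}.
\end{itemize}

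The converse direction of (i) needs care, because the second component of the right-hand side of~\eqref{gen-sol-nonhom} need not vanish. I would handle it as follows. Suppose $y$ is given by~\eqref{gen-sol-realcase} for some $(c_k,d_k)$. Let $\widetilde{Y}$ denote the full $X_{\mathbb{C}}$-valued right-hand side of~\eqref{gen-sol-nonhom} with these constants and with $\widetilde f=(f,0)$. Then $\widetilde{L}\widetilde{Y}=\widetilde f$. The remark gives $L(\operatorname{pr}_1\widetilde Y)=f$, and $\operatorname{pr}_1\widetilde Y=y$, so $Ly=f$.

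The forward direction uses the same remark in the other order. If $Ly=f$, then $(y,0)$ solves $\widetilde{L}\widetilde y=\widetilde f$. So $(y,0)$ equals~\eqref{gen-sol-nonhom} for some constants, and taking first components shows that $y$ has the form~\eqref{gen-sol-realcase}.

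For (ii), all $r_i$ are real, so each $\phi_i$ is real on the real axis away from its poles. Hence $\phi_i^{(n_i-k)}(r_i)\in\mathbb{R}$ and $A_{i,k}\equiv\phi_i^{(n_i-k)}(r_i)$. Also $\operatorname{Im}(r_i)=0$, so the sine terms vanish and the $d$'s drop out. Formula~\eqref{gen-sol-realcase} then reduces to~\eqref{gen-sol-nonhom} with $c_k\in X$. Alternatively, one can repeat the variation of parameters proof of Theorem~\ref{thm-gen-sol-hom} directly, since everything there is real.

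The main obstacle is the bookkeeping for the converse direction in (i). The key point is to handle it by projecting with $\operatorname{pr}_1$ and using that $\widetilde L$ acts componentwise, rather than by trying to force the second component to vanish.
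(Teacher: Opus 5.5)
Your proposal is correct and follows essentially the same route as the paper. Like the paper, you embed $y$ and $f$ as $(y,0)$ and $(f,0)$ in $X_{\mathbb{C}}$ and apply Theorem~\ref{thm-gen-sol-hom} to $\widetilde{L}$, reading off first coordinates via the multiplication rule for the forward direction; for the converse you build the full $X_{\mathbb{C}}$-valued solution from arbitrary $(c_k,d_k)$, project with $\pi_1$, use that $\widetilde{L}$ acts componentwise, and obtain (ii) by specializing to real roots.
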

\begin{proof}
Suppose that $y\in\mathcal{C}^{n}(\mathbb{R},X)$ is a solution of~\eqref{non-hom}. Define
$\widetilde y\in\mathcal{C}^{n}(\mathbb{R},X_{\mathbb C})$ by
$\widetilde y(t):=(y(t),0)$ for $t\in\mathbb R$. Then $\widetilde y$ satisfies
$\widetilde L\widetilde y=\widetilde f$, where
$\widetilde f(t):=(f(t),0)$ for every $t\in\mathbb R$. Hence, by Theorem~\ref{thm-gen-sol-hom},
\begin{equation}\label{gen-sol-complexcase}
\widetilde y(t)
=
\sum_{i=1}^{m}\sum_{j=1}^{n_i}
t^{j-1}e^{r_it}\widetilde c_{\widehat q(i-1)+j}
+
\sum_{i=1}^{m}\sum_{k=1}^{n_i}
\frac{\phi_i^{(n_i-k)}(r_i)}
{(k-1)!(n_i-k)!}
\int_0^t
(t-\tau)^{k-1}
e^{r_i(t-\tau)}
\widetilde f(\tau)\,d\tau,
\end{equation}

for every $t\in\mathbb R$, where
$\widetilde c_j=(c_j,d_j)\in X_{\mathbb C}$ for
$j\in\mathbb N_n$. Since
\begin{equation*}
	\begin{aligned}	 e^{r_i s}=e^{\operatorname{Re}(r_i)s}\bigl(\cos(\operatorname{Im}(r_i)s)+\textbf{i}\sin(\operatorname{Im}(r_i)s)\bigr),
		\\
		\phi_i^{(n_i-k)}(r_i)\,e^{r_i s}
		=
		e^{\operatorname{Re}(r_i)s}\Bigl(A_{i,k}(s)+\textbf{i}B_{i,k}(s)\Bigr),
	\end{aligned} \qquad s\in \mathbb{R},
\end{equation*}
where $A_{i,k}(s)$ is given by~\eqref{A_i,k}, and
\[
B_{i,k}(s):=\operatorname{Re}\bigl(\phi_i^{(n_i-k)}(r_i)\bigr)\sin(\operatorname{Im}(r_i)s)+\operatorname{Im}\bigl(\phi_i^{(n_i-k)}(r_i)\bigr)\cos(\operatorname{Im}(r_i)s), \qquad s\in \mathbb{R},
\]
it follows from~\eqref{mult-complex} that~\eqref{gen-sol-realcase} holds.
	
	Conversely, suppose that $y$ is of the form~\eqref{gen-sol-realcase} for some
	$\widetilde c_j=(c_j,d_j)\in X_{\mathbb C}$, $j\in\mathbb N_n$. Set
	$\widetilde f(t):=(f(t),0)$ for every $t\in\mathbb R$, and let
	$\widetilde y\in\mathcal C^n(\mathbb R,X_{\mathbb C})$ be defined as in~\eqref{gen-sol-complexcase}. A direct computation shows that $y(t)=\pi_{1}(\widetilde{y}(t))$ for $t\in\mathbb{R}$, where $\pi_{1}:X_{\mathbb{C}}\to X$ denotes the first coordinate projection, that is,
	\begin{equation}\label{projection}
		\pi_{1}(u_{1},u_{2})=u_{1}, \qquad u_{1},u_{2}\in X.
	\end{equation}
	Invoking Theorem~\ref{thm-gen-sol-hom}, we have $\widetilde L\widetilde y=\widetilde f$, and consequently $Ly=f$.
	
		\smallskip
		(ii)  This follows directly from part~(i), since $r_i,\phi_i^{(n_i-k)}(r_i)\in\mathbb{R}$ for all $i\in\mathbb{N}_{m}$ and $k\in\mathbb{N}_{n_i}$.
\end{proof}
	
	\section{ Best Ulam constant}\label{sec4}
	
In this section, we first establish an Ulam stability result for the operator $L$, and then determine its best Ulam constant. We also derive several consequences concerning the best Ulam constant for important special cases of $L$.

As in Section~\ref{sect3}, $r_1, \ldots, r_m \in \mathbb{C}$ ($m\in \mathbb{N}_n$) denote the pairwise distinct roots of the characteristic equation~\eqref{char-equ}, with respective multiplicities $n_1,\ldots,n_m$. Furthermore, we put
\begin{equation}
	\label{sets}
	I_{1}:=\{i\in\mathbb{N}_m:\operatorname{Re}(r_i)>0\}
	\qquad\text{and}\qquad
	I_{2}:=\{i\in\mathbb{N}_m:\operatorname{Re}(r_i)<0\}.
\end{equation}
The following remark gives equivalent characterizations of Ulam stability of $L$ with Ulam constant $K\geq0$ (see~\cite[Proof of Proposition 5]{Buica}).

\begin{rem}\label{rem-equiv}
	The following statements are equivalent:
	\begin{enumerate}
		\item[$\mathrm{(i)}$] $L$ is Ulam stable with Ulam constant $K$;
		\item[$\mathrm{(ii)}$] there exists $f\in\mathcal{C}(\mathbb{R},X)$ such that for every $\epsilon>0$ and every $y\in\mathcal{C}^{n}(\mathbb{R},X)$ satisfying $\|Ly-f\|_{\infty}\leq\epsilon$, there exists $x\in\mathcal{C}^{n}(\mathbb{R},X)$ such that $Lx=f$ and $\|y-x\|_{\infty}\leq K\epsilon$;
		\item[$\mathrm{(iii)}$] for every $\epsilon>0$ and every $y\in\mathcal{C}^{n}(\mathbb{R},X)$ satisfying $\|Ly\|_{\infty}\leq\epsilon$, there exists $x\in\ker L$ such that $\|y-x\|_{\infty}\leq K\epsilon$.
	\end{enumerate}
\end{rem}

	Remark~\ref{rem-equiv} allows us to work with the simpler formulation~$\mathrm{(iii)}$. In particular, to prove that $L$ is Ulam stable with Ulam constant $K$, it suffices to verify~$\mathrm{(iii)}$. Moreover, $K$ is the best Ulam constant of $L$ if and only if it is the smallest constant for which~$\mathrm{(iii)}$ holds.

We shall need the following lemma.

	\begin{lem}\label{lem4.1}
	Let $\operatorname{Re}(r_{i})\neq 0$ for all $i\in \mathbb{N}_m$. If \(x\in \ker L\) satisfies
		$
		\|x\|_\infty<\infty
		$,
		then \(x(t)=0\) for all \(t\in\mathbb{R}\).
	\end{lem}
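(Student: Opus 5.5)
I would first reduce to the complex case and then kill the coefficients one root at a time, starting from the largest power of $t$. If $\mathbb{K}=\mathbb{R}$, I would pass to the complexification: setting $\widetilde{x}:=(x,0)$ gives $\widetilde{x}\in\ker\widetilde{L}$, and by \eqref{Tay-nor-ineq} we have $\|\widetilde{x}\|_{T,\infty}\le\|x\|_\infty<\infty$. The characteristic roots of $\widetilde L$ are the same as those of $L$. So it suffices to prove the lemma for $\mathbb{K}=\mathbb{C}$. In that case I would write $x$ in the form \eqref{sol-hom-eq} with constants $c_1,\ldots,c_n\in X$, and the task is to show that all $c_k$ vanish.

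Next I would apply bounded linear functionals to reduce to scalar functions. For each $\Lambda\in X^{*}$, the function $g:=\Lambda\circ x$ is a scalar exponential polynomial
\[
g(t)=\sum_{i=1}^{m}\sum_{j=1}^{n_i}\Lambda(c_{\widehat{q}(i-1)+j})\,t^{j-1}e^{r_it},
\]
and it is bounded on $\mathbb{R}$. Once I show that every coefficient $\Lambda(c_k)$ is zero, Hahn--Banach gives $c_k=0$ for all $k$, and hence $x\equiv0$.

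For the scalar statement I would split the roots using the sets $I_1$ and $I_2$ from \eqref{sets}. Write $g=g_1+g_2$, where $g_1$ collects the terms with $i\in I_1$ and $g_2$ those with $i\in I_2$. As $t\to-\infty$, the function $g_1$ tends to $0$ exponentially, so $g_2$ is bounded on $(-\infty,0]$. Symmetrically, $g_1$ is bounded on $[0,\infty)$. Consider $g_1$ on $[0,\infty)$, and let $\sigma:=\max_{i\in I_1}\operatorname{Re}(r_i)>0$. Among the indices with $\operatorname{Re}(r_i)=\sigma$, let $p$ be the largest power $j-1$ carrying a nonzero coefficient. Then
\[
t^{-p}e^{-\sigma t}g_1(t)=P(t)+o(1),\qquad t\to\infty,
\]
where $P(t)=\sum_i\beta_i e^{\mathbf{i}\operatorname{Im}(r_i)t}$ is a trigonometric polynomial with distinct frequencies $\operatorname{Im}(r_i)$. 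The left-hand side tends to $0$, because $g_1$ is bounded and $t^{p}e^{\sigma t}\to\infty$. Hence $P(t)\to0$ as $t\to\infty$.

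The main obstacle is showing that $P(t)\to0$ forces all $\beta_i=0$. I would handle it with the mean value argument
\[
\beta_{i_0}=\lim_{T\to\infty}\frac{1}{T}\int_T^{2T}P(t)e^{-\mathbf{i}\operatorname{Im}(r_{i_0})t}\,dt,
\]
which holds because the frequencies are distinct. The right-hand side is $0$ since $P\to0$, so every $\beta_{i_0}=0$. This contradicts the choice of $p$ and $\sigma$. Inducting downward on $(\sigma,p)$ then shows that all coefficients of $g_1$ vanish. Applying the same argument to $g_2(-t)$ as $t\to\infty$ handles the terms with $i\in I_2$. Because $\operatorname{Re}(r_i)\ne0$ for every $i$, the sets $I_1$ and $I_2$ cover all roots, so every coefficient $\Lambda(c_k)$ is zero, which completes the proof.
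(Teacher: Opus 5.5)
Your proof is correct. Your reduction of the real case to the complex one via $\widetilde{x}=(x,0)$ and \eqref{Tay-nor-ineq} is exactly what the paper does.

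The two routes part ways in the complex case. The paper gives no argument there and simply invokes \cite[Lemma~2.6]{Miura}. You give a self-contained elementary proof:
\begin{enumerate}
\item[(a)] scalarize with functionals $\Lambda\in X^{*}$;
\item[(b)] split $g=g_1+g_2$ and use $g_2\to 0$ at $+\infty$ and $g_1\to 0$ at $-\infty$ to get each part bounded on the relevant half-line;
\item[(c)] isolate the dominant block $t^{p}e^{\sigma t}$;
\item[(d)] kill the resulting trigonometric polynomial with distinct frequencies by the averages $\frac{1}{T}\int_T^{2T}$.
\end{enumerate}

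The paper's route is shorter but rests on an external reference. Yours shows exactly where $\operatorname{Re}(r_i)\neq 0$ enters. First, $\sigma>0$ is what makes $t^{p}e^{\sigma t}\to\infty$ even when $p=0$; for purely imaginary roots the lemma fails, e.g.\ $e^{\mathbf{i}t}c$. Second, the hypothesis guarantees $I_1\cup I_2=\mathbb{N}_m$. Your route also explains why several distinct roots on a common vertical line cause no difficulty.

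One small point of presentation. As written, $\sigma:=\max_{i\in I_1}\operatorname{Re}(r_i)$ may leave $p$ undefined if every coefficient at that level vanishes. Argue by contradiction instead, taking $\sigma$ to be the largest real part among roots in $I_1$ that actually carry a nonzero coefficient. Your phrase ``inducting downward on $(\sigma,p)$'' amounts to this, but it is cleaner stated explicitly.

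Finally, the Hahn--Banach step is optional. The same asymptotic and averaging argument works verbatim for the $X$-valued exponential polynomial itself, with the averages taken as Riemann integrals of continuous $X$-valued functions.
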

	\begin{proof}
		The case $\mathbb{K}=\mathbb{C}$ follows directly from~\cite[Lemma~2.6]{Miura}. Assume now that $\mathbb{K}=\mathbb{R}$, and let $X_{\mathbb{C}}$ be the complexification of $X$ introduced in Section~\ref{sect3}. Define $\widetilde{x}\in \mathcal{C}^{n}(\mathbb{R},X_{\mathbb{C}})$ by $\widetilde{x}(t):=(x(t),0)$ for $t\in\mathbb{R}$. Clearly, \(\widetilde{x}\in\ker\widetilde{L}\). Moreover, by~\eqref{Tay-nor-ineq}, $\|\widetilde{x}\|_{T,\infty}\le\|x\|_{\infty}<\infty$. Hence the complex case gives $\widetilde{x}=0$, and consequently $x=0$.
	\end{proof}
	
We now have all the necessary tools to state and prove our stability result. Throughout, we adopt the convention that $\sum_{i\in \emptyset}=0$. For $i\in\mathbb{N}_m$ and $k\in\mathbb{N}_{n_i}$, define
\begin{equation}
	\label{Phi_i,k}
	\Psi_{i,k}(r)
	=
	\frac{\phi_i(r)}
	{(k-1)!(n_i-k)!},
	\qquad
	r\in
	\mathbb{C}\setminus
	\{r_l:\,l\in\mathbb{N}_m\setminus\{i\}\},
\end{equation}
where $\phi_i$ is defined by~\eqref{Qi}.

\begin{thm}
	\label{thm4.1}
	Let $\operatorname{Re}(r_i)\neq0$ for all $i\in\mathbb{N}_m$.
	Then, for every $\epsilon>0$, every $f\in\mathcal{C}(\mathbb{R},X)$, and every $y\in\mathcal{C}^{n}(\mathbb{R},X)$ satisfying $\|Ly-f\|_{\infty}\leq\epsilon$, there exists a unique $x\in\mathcal{C}^{n}(\mathbb{R},X)$ such that $Lx=f$ and
	\begin{equation*}
		\|y-x\|_{\infty}\leq K\epsilon,
	\end{equation*}
	where
	\begin{equation*}
		K=\int_{0}^{\infty}\biggl(\biggl|\sum_{i\in I_{1}}\sum_{k=1}^{n_{i}}\Psi_{i,k}^{(n_i -k)}(r_i)(-s)^{k-1}e^{-r_{i}s}\biggr|+\biggl|\sum_{i\in I_{2}}\sum_{k=1}^{n_{i}}\Psi_{i,k}^{(n_i -k)}(r_i)s^{k-1}e^{r_{i}s}\biggr|\biggr)\, ds.
	\end{equation*}
\end{thm}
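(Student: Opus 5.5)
By Remark~\ref{rem-equiv} it suffices to prove statement~(iii): if $\|Ly\|_\infty\le\epsilon$, then there is $x\in\ker L$ with $\|y-x\|_\infty\le K\epsilon$. Uniqueness is handled by Lemma~\ref{lem4.1}. If $x_1,x_2$ both solve $Lx=f$ and lie within bounded distance of $y$, then $x_1-x_2\in\ker L$ is bounded, so it vanishes.

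Work first in the case $\mathbb{K}=\mathbb{C}$. Set $g:=Ly$, so $\|g\|_\infty\le\epsilon$. Note that $\Psi_{i,k}^{(n_i-k)}(r_i)=\phi_i^{(n_i-k)}(r_i)/((k-1)!(n_i-k)!)$, which is exactly the coefficient in Theorem~\ref{thm-gen-sol-hom}. Instead of integrating from $0$, I build the particular solution with the Green's-function choice. For $i\in I_2$ I keep the integral $\int_{-\infty}^t$, and for $i\in I_1$ I use $-\int_t^{\infty}$. Define
\[
z(t):=\sum_{i\in I_2}\sum_{k=1}^{n_i}\Psi_{i,k}^{(n_i-k)}(r_i)\int_{-\infty}^{t}(t-\tau)^{k-1}e^{r_i(t-\tau)}g(\tau)\,d\tau-\sum_{i\in I_1}\sum_{k=1}^{n_i}\Psi_{i,k}^{(n_i-k)}(r_i)\int_{t}^{\infty}(t-\tau)^{k-1}e^{r_i(t-\tau)}g(\tau)\,d\tau .
\]
These integrals converge absolutely because $g$ is bounded and $\operatorname{Re}(r_i)<0$ for $i\in I_2$, respectively $>0$ for $i\in I_1$. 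The key observation is that $z$ differs from the particular solution $y_p$ of~\eqref{gen-sol-nonhom} (with $f=g$) by an element of $\ker L$. Indeed, $\int_{-\infty}^t=\int_{-\infty}^0+\int_0^t$. Expanding $(t-\tau)^{k-1}$ binomially, the $\int_{-\infty}^0$ part becomes $\sum_{j}t^{j}e^{r_it}\cdot(\text{constant in }X)$, which lies in the span~\eqref{sol-hom-eq}. The same holds for $-\int_t^\infty=\int_0^t-\int_0^\infty$. Hence $Lz=g=Ly$ by Theorem~\ref{thm-gen-sol-hom}, and $x:=y-z\in\ker L$.

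It remains to bound $\|z(t)\|$. Substitute $s=t-\tau$ in the $I_2$ integral and $s=\tau-t$ in the $I_1$ integral, and merge the inner sums over $i$ and $k$ under a single integral in $s\in(0,\infty)$. This gives
\[
z(t)=\int_0^\infty\Bigl(\sum_{i\in I_2}\sum_k\Psi_{i,k}^{(n_i-k)}(r_i)s^{k-1}e^{r_is}\Bigr)g(t-s)\,ds-\int_0^\infty\Bigl(\sum_{i\in I_1}\sum_k\Psi_{i,k}^{(n_i-k)}(r_i)(-s)^{k-1}e^{-r_is}\Bigr)g(t+s)\,ds .
\]
The triangle inequality together with $\|g(\cdot)\|\le\epsilon$ then yields $\|z(t)\|\le K\epsilon$. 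The point to be careful about is keeping the sums over $i,k$ inside the absolute values before estimating. This is what gives the stated $K$ rather than the cruder sum of the individual absolute values.

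For $\mathbb{K}=\mathbb{R}$, I pass to the complexification. Put $\widetilde g=(g,0)$ and form $\widetilde z$ as above with $\widetilde L$. Using $\|\widetilde g(t)\|_T=\|g(t)\|\le\epsilon$, obtain $\|\widetilde z(t)\|_T\le K\epsilon$. Then take $z:=\pi_1\widetilde z$ with $\pi_1$ from~\eqref{projection}. Since the coefficients are real, $Lz=g$, and by~\eqref{Tay-nor-ineq} $\|z(t)\|\le\|\widetilde z(t)\|_T\le K\epsilon$.

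I expect the main obstacle to be the rigorous justification that the shifted-limit integrals differ from $y_p$ only by a kernel element. This requires the binomial expansion and the convergence of $\int_{-\infty}^0\tau^{j}e^{-r_i\tau}g(\tau)\,d\tau$ for $i\in I_2$, and of $\int_0^\infty\tau^{j}e^{-r_i\tau}g(\tau)\,d\tau$ for $i\in I_1$. An alternative is to verify $Lz=g$ directly by differentiating under the integral sign, but the reduction to Theorem~\ref{thm-gen-sol-hom} seems cleaner.
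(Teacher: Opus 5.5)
Your proposal is correct and is essentially the paper's proof. The paper defines $x$ as the homogeneous part of $y$ plus the kernel element built from the constants $c^*$ in \eqref{define-constants-c}, which is exactly $y-z$ for your Green's-function solution $z$, and its identities \eqref{Exp-I1}--\eqref{Exp-I_2} are your binomial-expansion step. The estimate and the uniqueness argument via Lemma~\ref{lem4.1} also match, as does the complexification for $\mathbb{K}=\mathbb{R}$; there you project $\widetilde z$ rather than the paper's $\widetilde x$, which is equivalent.
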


\begin{proof}
	Fix $\epsilon>0$ and let $y\in\mathcal{C}^{n}(\mathbb{R},X)$ satisfy $\|Ly\|_{\infty}\le\epsilon$. We show that there exists $x\in\ker L$ such that $\|y-x\|_{\infty}\le K\epsilon$.
	
	We first consider the case $\mathbb{K}=\mathbb{C}$. Set $g:=Ly$. Then $g\in\mathcal{C}(\mathbb{R},X)$ and
	\begin{equation}
		\label{bound-f}
		\|g\|_{\infty}\le\epsilon.
	\end{equation}
	
	By Theorem~\ref{thm-gen-sol-hom}, there exist vectors $c_1,\ldots,c_n\in X$ such that
	\begin{equation}
		\label{form-y}
		y(t)=
		\sum_{i=1}^{m}\sum_{j=1}^{n_i}
		t^{j-1}e^{r_it}c_{\widehat{q}(i-1)+j}
		+
		\sum_{i=1}^{m}\sum_{k=1}^{n_i}
		\Psi_{i,k}^{(n_i-k)}(r_i)
		\int_{0}^{t}
		(t-\tau)^{k-1}e^{r_i(t-\tau)}g(\tau)\,d\tau,
		\qquad t\in\mathbb{R}.
	\end{equation}
	
	Define $x\in\mathcal{C}^{n}(\mathbb{R},X)$ by
	\begin{equation*}
		x(t):=\sum_{i=1}^{m}\sum_{j=1}^{n_i}
		t^{j-1}e^{r_it}c_{\widehat{q}(i-1)+j}
		+ \sum_{i=1}^{m}\sum_{j=1}^{n_i}
		t^{j-1}e^{r_it}c^{*}_{\widehat{q}(i-1)+j},
		\qquad t\in \mathbb{R},
	\end{equation*}
	where, for each $i\in\mathbb{N}_m$ and $j\in\mathbb{N}_{n_i}$,
	\begin{equation}
		\label{define-constants-c}
		c^{*}_{\widehat{q}(i-1)+j}:=
		\begin{cases}
			\displaystyle
			\frac{1}{(j-1)!}
			\sum_{k=j}^{n_i}
			\frac{\phi_i^{(n_i-k)}(r_i)}
			{(k-j)!(n_i-k)!}
			\int_{0}^{\infty}
			(-\tau)^{k-j}e^{-r_i\tau}g(\tau)\,d\tau,
			& i\in I_1,\\[5mm]
			\displaystyle
			-\frac{1}{(j-1)!}
			\sum_{k=j}^{n_i}
			\frac{\phi_i^{(n_i-k)}(r_i)}
			{(k-j)!(n_i-k)!}
			\int_{-\infty}^{0}
			(-\tau)^{k-j}e^{-r_i\tau}g(\tau)\,d\tau,
			& i\in I_2.
		\end{cases}
	\end{equation}
	We first verify that $c^{*}_1,\ldots,c^{*}_n$ are well defined. Fix $i\in I_1$ and $j\in\mathbb{N}_{n_i}$. Since $\operatorname{Re}(r_i)>0$, it follows from~\eqref{bound-f} that
	\[
	\int_{0}^{\infty}
	\bigl\|(-\tau)^{k-j}e^{-r_i\tau}g(\tau)\bigr\|\,d\tau
	\le
	\epsilon
	\int_{0}^{\infty}
	\tau^{k-j}e^{-\operatorname{Re}(r_i)\tau}\,d\tau =\epsilon\frac{(k-j)!}{\operatorname{Re}(r_i)^{k-j+1}}
	<\infty
	\]
	for every $k=j,\ldots,n_i$. Hence the improper integrals defining $c^{*}_{\widehat{q}(i-1)+j}$ converge absolutely, so $c^{*}_{\widehat{q}(i-1)+j}$ is well defined; the case $i\in I_2$ is treated similarly.
	
	Since $x$ has the form described in~\eqref{sol-hom-eq}, we have $x\in\ker L$. Furthermore,
	\begin{align}
		& \sum_{i\in I_1}\sum_{j=1}^{n_i}
		t^{j-1}e^{r_it}c^{*}_{\widehat{q}(i-1)+j}
		\nonumber\\
		& \quad=
		\sum_{i\in I_1}\sum_{j=1}^{n_i}
		t^{j-1}e^{r_it}\frac{1}{(j-1)!}
		\sum_{k=j}^{n_i}
		\frac{\phi_i^{(n_i-k)}(r_i)}
		{(k-j)!(n_i-k)!}
		\int_{0}^{\infty}
		(-\tau)^{k-j}e^{-r_i\tau}g(\tau)\,d\tau
		\nonumber\\
		& \quad=
		\sum_{i\in I_1}\sum_{k=1}^{n_i}
		\frac{\phi_i^{(n_i-k)}(r_i)}
		{(n_i-k)!}
		\int_{0}^{\infty}
		\sum_{j=1}^{k}
		\frac{t^{j-1}(-\tau)^{k-j}}
		{(j-1)!(k-j)!}
		e^{r_i(t-\tau)}g(\tau)\,d\tau
		\nonumber\\
		& \quad=
		\sum_{i\in I_1}\sum_{k=1}^{n_i}
		\frac{\phi_i^{(n_i-k)}(r_i)}
		{(k-1)!(n_i-k)!}
		\int_{0}^{\infty}
		\sum_{j=0}^{k-1}
		\binom{k-1}{j}
		t^{j}(-\tau)^{k-1-j}
		e^{r_i(t-\tau)}g(\tau)\,d\tau
		\nonumber\\
		& \quad=
		\sum_{i\in I_1}\sum_{k=1}^{n_i}
		\Psi_{i,k}^{(n_i-k)}(r_i)
		\int_{0}^{\infty}
		(t-\tau)^{k-1}
		e^{r_i(t-\tau)}
		g(\tau)\,d\tau,
		\qquad t\in\mathbb{R}.
		\label{Exp-I1}
	\end{align}
	Similarly, one has
	\begin{equation}
		\label{Exp-I_2}
		\sum_{i\in I_2}\sum_{j=1}^{n_i}
		t^{j-1}e^{r_it}c^{*}_{\widehat{q}(i-1)+j}
		=
		-\sum_{i\in I_2}\sum_{k=1}^{n_i}
		\Psi_{i,k}^{(n_i-k)}(r_i)
		\int_{-\infty}^{0}
		(t-\tau)^{k-1}
		e^{r_i(t-\tau)}
		g(\tau)\,d\tau,
		\qquad t\in\mathbb{R}.
	\end{equation}
	Combining~\eqref{Exp-I1} and~\eqref{Exp-I_2}, we obtain
	\begin{align}
		x(t)
		={}&
		\sum_{i=1}^{m}\sum_{j=1}^{n_i}
		t^{j-1}e^{r_it}c_{\widehat{q}(i-1)+j}
		+
		\sum_{i\in I_1}\sum_{k=1}^{n_i}
		\Psi_{i,k}^{(n_i-k)}(r_i)
		\int_{0}^{\infty}
		(t-\tau)^{k-1}
		e^{r_i(t-\tau)}
		g(\tau)\,d\tau
		\nonumber\\
		&
		-
		\sum_{i\in I_2}\sum_{k=1}^{n_i}
		\Psi_{i,k}^{(n_i-k)}(r_i)
		\int_{-\infty}^{0}
		(t-\tau)^{k-1}
		e^{r_i(t-\tau)}
		g(\tau)\,d\tau,
		\qquad t\in\mathbb{R}.
		\label{form-x}
	\end{align}
	By~\eqref{form-y} and~\eqref{form-x},
	\begin{align*}
		\|y(t)-x(t)\|
		&=
		\Bigl\|
		-\int_t^\infty
		\sum_{i\in I_1}\sum_{k=1}^{n_i}
		\Psi_{i,k}^{(n_i-k)}(r_i)
		(t-\tau)^{k-1}
		e^{r_i(t-\tau)}
		g(\tau)\,d\tau
		\\
		&\qquad
		+
		\int_{-\infty}^t
		\sum_{i\in I_2}\sum_{k=1}^{n_i}
		\Psi_{i,k}^{(n_i-k)}(r_i)
		(t-\tau)^{k-1}
		e^{r_i(t-\tau)}
		g(\tau)\,d\tau
		\Bigr\|
		\\
		&=
		\Bigl\|
		-\int_0^\infty
		\sum_{i\in I_1}\sum_{k=1}^{n_i}
		\Psi_{i,k}^{(n_i-k)}(r_i)
		(-s)^{k-1}
		e^{-r_is}
		g(t+s)\,ds
		\\
		&\qquad
		+
		\int_0^\infty
		\sum_{i\in I_2}\sum_{k=1}^{n_i}
		\Psi_{i,k}^{(n_i-k)}(r_i)
		s^{k-1}
		e^{r_is}
		g(t-s)\,ds
		\Bigr\|
		\\
		&\le
		\epsilon
		\int_0^\infty
		\Bigl|
		\sum_{i\in I_1}\sum_{k=1}^{n_i}
		\Psi_{i,k}^{(n_i-k)}(r_i)
		(-s)^{k-1}
		e^{-r_is}
		\Bigr|
		\,ds
		\\
		&\qquad
		+
		\epsilon
		\int_0^\infty
		\Bigl|
		\sum_{i\in I_2}\sum_{k=1}^{n_i}
		\Psi_{i,k}^{(n_i-k)}(r_i)
		s^{k-1}
		e^{r_is}
		\Bigr|
		\,ds
		\\[2mm]
		&=K\epsilon, \qquad t\in \mathbb{R}.
	\end{align*}
	This proves that $\|y-x\|_{\infty}\le K\epsilon$ when $\mathbb{K}=\mathbb{C}$.
	
	We now turn to the case $\mathbb{K}=\mathbb{R}$. Let $X_{\mathbb{C}}$ be the complexification of $X$, and let $\widetilde{y}\in\mathcal{C}^{n}(\mathbb{R},X_{\mathbb{C}})$ be given by $\widetilde{y}(t):=(y(t),0)$ for $t\in\mathbb{R}$. By~\eqref{Tay-nor-ineq}, 
	\[
	\|\widetilde{L}\widetilde{y}\|_{T,\infty}=\|(Ly, 0)\|_{T,\infty}\leq\|Ly\|_{\infty}\leq\epsilon.
	\]
	 Hence, by the first part of the proof, there exists $\widetilde{x}\in\ker \widetilde{L}$ such that
	\begin{equation}
		\label{esti-compl}
		\|\widetilde{y}-\widetilde{x}\|_{T,\infty}\le K\epsilon.
	\end{equation}
	Define $x\in\mathcal{C}^{n}(\mathbb{R},X)$ by $x(t):=\pi_{1}(\widetilde{x}(t))$ for $t\in\mathbb{R}$, where $\pi_{1}:X_{\mathbb{C}}\to X$ denotes the first coordinate projection defined in~\eqref{projection}.
	Since $\widetilde x\in\ker\widetilde L$ and $\ker \widetilde L$ consists precisely of the pairs whose coordinates lie in $\ker L$, the projection $x=\pi_1(\widetilde x)$ belongs to $\ker L$. Furthermore, by~\eqref{Tay-nor-ineq} and~\eqref{esti-compl},
	\[
	\|x(t)-y(t)\|\le\|\widetilde{x}(t)-\widetilde{y}(t)\|_{T}\le K\epsilon,
	\qquad t\in\mathbb{R}.
	\]
	
	We have thus shown that for every $\epsilon>0$ and every $y\in\mathcal{C}^{n}(\mathbb{R},X)$ satisfying $\|Ly\|_{\infty}\le\epsilon$, there exists $x\in\ker L$ such that $\|y-x\|_{\infty}\le K\epsilon$. By Remark~\ref{rem-equiv}, it follows that for every $\epsilon>0$, every $f\in\mathcal{C}(\mathbb{R},X)$, and every $y\in\mathcal{C}^n(\mathbb{R},X)$ satisfying $\|Ly-f\|_{\infty}\le\epsilon$, there exists $x\in\mathcal{C}^n(\mathbb{R},X)$ such that $Lx=f$ and $\|y-x\|_{\infty}\le K\epsilon$.
	
	It remains to establish uniqueness. Suppose that $x,z\in\mathcal{C}^n(\mathbb{R},X)$ satisfy $Lx=Lz=f$, $\|y-x\|_{\infty}\le K\epsilon$, and $\|y-z\|_{\infty}\le K\epsilon$. Then $x-z\in\ker L$ and
	\[
	\|x(t)-z(t)\|
	\le
	\|x(t)-y(t)\|+\|y(t)-z(t)\|
	\le
	2K\epsilon
	\]
	for all $t\in\mathbb{R}$. In particular, $\|x-z\|_{\infty}<\infty$, and Lemma~\ref{lem4.1} yields $x=z$.
\end{proof}
	
	We are now ready to determine the best Ulam constant of $L$.
	
	\begin{thm}\label{thm-best-Ulam}
		Let $\operatorname{Re}(r_{i})\neq 0$ for all $i\in \mathbb{N}_m$. Then the best Ulam constant of $L$ is given by
		\begin{equation}\label{best-const}
			K_{L}= \dint_{0}^{\infty}\biggl(\biggl|\sum_{i\in I_{1}}\sum_{k=1}^{n_{i}}\Psi_{i,k}^{(n_i -k)}(r_i)(-s)^{k-1}e^{-r_{i}s}\biggr|+\biggl|\sum_{i\in I_{2}}\sum_{k=1}^{n_{i}}\Psi_{i,k}^{(n_i -k)}(r_i)s^{k-1}e^{r_{i}s}\biggr|\biggr)\, ds.
		\end{equation}
	\end{thm}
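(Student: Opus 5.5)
Theorem~\ref{thm4.1} already shows that $K$ in~\eqref{best-const} is an Ulam constant of $L$. It therefore suffices to prove that no smaller constant works. By Remark~\ref{rem-equiv}(iii), for every $\eta>0$ we will exhibit $y\in\mathcal{C}^n(\mathbb{R},X)$ with $\|Ly\|_\infty\le 1$ such that $\|y-x\|_\infty\ge K-\eta$ for every $x\in\ker L$.

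Write
\[
G(s)=\begin{cases}-\sum_{i\in I_1}\sum_{k}\Psi_{i,k}^{(n_i-k)}(r_i)(-s)^{k-1}e^{-r_is}, & s>0,\\[1mm] \sum_{i\in I_2}\sum_{k}\Psi_{i,k}^{(n_i-k)}(r_i)(-s)^{k-1}e^{r_i(-s)}, & s<0,\end{cases}
\]
so that the proof of Theorem~\ref{thm4.1} shows
\[
y(t)-x_y(t)=\int_{\mathbb{R}}G(\tau-t)\,g(\tau)\,d\tau,\qquad g=Ly,
\]
where $x_y\in\ker L$ is the solution constructed there. Moreover $K=\int_{\mathbb{R}}|G|$.

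\textbf{Step 1 (reduction).} Take any bounded $g\in\mathcal{C}(\mathbb{R},X)$ and let $y$ be the solution of $Ly=g$ given by Theorem~\ref{thm-gen-sol-hom}, with all $c_j=0$. The difference $y-x_y$ is bounded, and any $x\in\ker L$ with $\|y-x\|_\infty<\infty$ satisfies $x-x_y\in\ker L$ bounded. Hence $x=x_y$ by Lemma~\ref{lem4.1}, so $\inf_{x\in\ker L}\|y-x\|_\infty=\|y-x_y\|_\infty$. It then suffices to find $g$ with $\|g\|_\infty\le1$ and $\bigl\|\int G(\tau)g(\tau)\,d\tau\bigr\|\ge K-\eta$, evaluating at $t=0$.

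\textbf{Step 2 (choice of $g$).} Fix a unit vector $e\in X$ and set $g(\tau)=h(\tau)e$ with $h$ scalar, continuous, and $|h|\le1$. If $\mathbb{K}=\mathbb{C}$, the natural choice is $h=\overline{G}/|G|$. This is discontinuous at the zeros of $G$ and at $\tau=0$, where $G$ jumps. Instead take $h_\delta=\overline{G}/\max(|G|,\delta)$, and multiply by a continuous cutoff equal to $1$ outside $(-\delta,\delta)$ and vanishing at $0$. Then $h_\delta$ is continuous with $|h_\delta|\le1$. Since $G\in L^1$ and $G$ is bounded near $0$, dominated convergence gives $\int Gh_\delta\to\int|G|=K$ as $\delta\to0$.

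If $\mathbb{K}=\mathbb{R}$ but some $r_i$ are nonreal, $G$ is still real-valued, because the roots and their $\Psi$ data occur in conjugate pairs. This follows as in the Corollary, using $\overline{\phi_i}=\phi_{\bar i}$. The same choice with $\overline{G}=G$ is then real and continuous, and no complexification is needed. I would verify this real-valuedness of $G$ explicitly by pairing conjugate roots.

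\textbf{Main obstacle.} The delicate point is Step 1: we must ensure that the infimum over $\ker L$ is attained exactly by $x_y$, and that $y-x_y$ equals the convolution expression \emph{exactly} rather than merely being bounded by it. Both come from formula~\eqref{form-y}/\eqref{form-x} in the proof of Theorem~\ref{thm4.1}, combined with Lemma~\ref{lem4.1}. A secondary issue is the approximation argument, since $G$ may vanish on a set of positive measure only if it vanishes identically on a half-line, which is harmless. Together these give $\|y-x\|_\infty\ge K-\eta$ for all $x\in\ker L$ with $\|Ly\|_\infty\le1$. Hence no constant below $K$ is an Ulam constant, and $K_L=K$.
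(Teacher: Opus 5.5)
Your proposal is correct and follows essentially the paper's route: the upper bound from Theorem~\ref{thm4.1}, a continuous test forcing $g$ regularizing $\overline{G}/|G|$, Lemma~\ref{lem4.1} to pin down the only kernel element at finite distance, and evaluation at $t=0$. The differences are cosmetic: the paper regularizes with $\overline{\varphi_j}/(|\varphi_j|+\theta e^{\mp t})$ plus an affine bridge and uses constants $-c^{*}$ so that this kernel element is $0$, while you keep $c_j=0$ and identify it as $x_y$; for $\mathbb{K}=\mathbb{R}$ you use that $G$ is real-valued (true, since conjugate roots have equal multiplicities and conjugate coefficients $\Psi_{i,k}^{(n_i-k)}(r_i)$) instead of building $\widetilde g$ in $X_{\mathbb{C}}$, though the exact identity $y-x_y=\int G(\tau-t)g(\tau)\,d\tau$ there still comes from projecting the complexified formulas via $\pi_1$, as Theorem~\ref{thm-gen-sol-hom} is stated only for $\mathbb{K}=\mathbb{C}$.
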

	
	\begin{proof}
		By Theorem~\ref{thm4.1}, $K_L$ is an Ulam constant of $L$. To show that it is the \emph{best} Ulam constant, suppose, for the sake of contradiction, that $L$ admits an Ulam constant $K_1<K_L$. We treat the cases $\mathbb{K}=\mathbb{C}$ and $\mathbb{K}=\mathbb{R}$ separately, the latter being derived from the former.
		
		\medskip
		\noindent\textbf{Case $\mathbb{K}=\mathbb{C}$.}
		Put
		\[
		\varphi_{1}(t)
		=
		\sum_{i\in I_{1}}\sum_{k=1}^{n_{i}}
		\Psi_{i,k}^{(n_i -k)}(r_i)(-t)^{k-1}e^{-r_{i}t},
		\qquad
		\varphi_{2}(t)
		=
		\sum_{i\in I_{2}}\sum_{k=1}^{n_{i}}
		\Psi_{i,k}^{(n_i -k)}(r_i)t^{k-1}e^{r_{i}t},
		\qquad t\in\mathbb{R}.
		\]
		Fix $\theta>0$ and $u\in X$ with $\|u\|=1$, and let $\epsilon>0$. Consider the function
		\[
		g(t)
		:=
		\epsilon
		\begin{cases}
			\dfrac{\overline{\varphi_2(-t)}}{|\varphi_2(-t)|+\theta e^{t}}\,u,
			& t\in(-\infty,-\theta],\\[2mm]
		\alpha(t),
			& t\in(-\theta,\theta),\\[2mm]
			\dfrac{-\overline{\varphi_1(t)}}{|\varphi_1(t)|+\theta e^{-t}}\,u,
			& t\in[\theta,\infty),
		\end{cases}
		\]
		where $\alpha:(-\theta,\theta)\to X$ is a linear function chosen so that $g$ is continuous on $\mathbb{R}$ and $\|\alpha(t)\|\le 1$ for all $t\in(-\theta,\theta)$. By construction,
		\begin{equation}
			\label{bound-f-2}
			\|g\|_{\infty}\le\epsilon.
		\end{equation}
		
		According to Theorem~\ref{thm-gen-sol-hom}, the function $y\in \mathcal{C}^{n}(\mathbb{R},X)$ given by
		\begin{equation*}
			y(t)=
			\sum_{i=1}^{m}\sum_{j=1}^{n_i}
			t^{j-1}e^{r_it}(-c^{*}_{\widehat{q}(i-1)+j})
			+
			\sum_{i=1}^{m}\sum_{k=1}^{n_i}
			\Psi_{i,k}^{(n_i-k)}(r_i)
			\int_{0}^{t}
			(t-\tau)^{k-1}e^{r_i(t-\tau)}g(\tau)\,d\tau
		\end{equation*}
	for $t\in \mathbb{R}$,	where $c^{*}_{\widehat{q}(i-1)+j}$ is defined by~\eqref{define-constants-c}, satisfies $Ly=g$. Moreover, by~\eqref{bound-f-2}, $\|Ly\|_{\infty}=\|g\|_{\infty}\le\epsilon$. Since $K_1$ is assumed to be an Ulam constant for $L$, there exists $x\in\ker L$ such that
		\begin{equation}
			\label{estimation-assumtion}
			\|y-x\|_{\infty}\le K_{1}\epsilon.
		\end{equation}
		
		Using~\eqref{Exp-I1} and~\eqref{Exp-I_2}, we rewrite $y$ as
		\begin{align}
			y(t)
			&=
			-
			\sum_{i\in I_1}\sum_{k=1}^{n_i}
			\Psi_{i,k}^{(n_i-k)}(r_i)\int_t^\infty
			(t-\tau)^{k-1}
			e^{r_i(t-\tau)}
			g(\tau)\,d\tau
			\nonumber\\
			&\qquad
			+
			\sum_{i\in I_2}\sum_{k=1}^{n_i}
			\Psi_{i,k}^{(n_i-k)}(r_i)
			\int_{-\infty}^t(t-\tau)^{k-1}
			e^{r_i(t-\tau)}
			g(\tau)\,d\tau
			\nonumber\\
			&=
			-\int_0^\infty
			\sum_{i\in I_1}\sum_{k=1}^{n_i}
			\Psi_{i,k}^{(n_i-k)}(r_i)
			(-s)^{k-1}
			e^{-r_is}
			g(t+s)\,ds
			\nonumber\\
			&\qquad
			+
			\int_0^\infty
			\sum_{i\in I_2}\sum_{k=1}^{n_i}
			\Psi_{i,k}^{(n_i-k)}(r_i)
			s^{k-1}
			e^{r_is}
			g(t-s)\,ds,
			\label{express-y}
		\end{align}
		from which
		\begin{align}
			\|y(t)\|
			&\le
			\epsilon
			\int_{0}^{\infty}
			\Bigl(
			\Bigl|
			\sum_{i\in I_{1}}\sum_{k=1}^{n_{i}}
			\Psi_{i,k}^{(n_i-k)}(r_i)(-s)^{k-1}e^{-r_{i}s}
			\Bigr|
			+
			\Bigl|
			\sum_{i\in I_{2}}\sum_{k=1}^{n_{i}}
			\Psi_{i,k}^{(n_i-k)}(r_i)s^{k-1}e^{r_{i}s}
			\Bigr|
			\Bigr)\,ds
			\nonumber\\
			&=
			\epsilon K_{L},
			\qquad t\in\mathbb{R}.
			\label{bouned-y}
		\end{align}
		Combining~\eqref{estimation-assumtion} and~\eqref{bouned-y} yields
		\[
		\|x(t)\|
		\le
		\|x(t)-y(t)\|+\|y(t)\|
		\le
		\epsilon(K_{1}+K_{L}),
		\qquad
		t\in\mathbb{R},
		\]
		and Lemma~\ref{lem4.1} then gives $x=0$. Consequently, \eqref{estimation-assumtion} reduces to
		\[
		\|y\|_{\infty}\le\epsilon K_{1},
		\]
		and, in particular,
		\begin{equation}
			\label{bouned-y0}
			\|y(0)\|\le\epsilon K_{1}.
		\end{equation}
		
		Evaluating~\eqref{express-y} at $t=0$ gives
		\begin{align}
			y(0)
			&=
			-\int_0^\infty
			\sum_{i\in I_1}\sum_{k=1}^{n_i}
			\Psi_{i,k}^{(n_i-k)}(r_i)
			(-s)^{k-1}
			e^{-r_is}
			g(s)\,ds
			\nonumber
			\\
			&\qquad	+
			\int_0^\infty
			\sum_{i\in I_2}\sum_{k=1}^{n_i}
			\Psi_{i,k}^{(n_i-k)}(r_i)
			s^{k-1}
			e^{r_is}
			g(-s)\,ds
			\nonumber\\
			&=
			\epsilon
			\int_{\theta}^{\infty}
			\frac{|\varphi_1(s)|^{2}}
			{|\varphi_1(s)|+\theta e^{-s}}
			u\,ds
			+
			\epsilon
			\int_{\theta}^{\infty}
			\frac{|\varphi_2(s)|^{2}}
			{|\varphi_2(s)|+\theta e^{-s}}
			u\,ds
			\nonumber \\
			&
			\qquad	+
			\epsilon
			\int_{0}^{\theta}
			\bigl(\varphi_2(s)g(-s)-\varphi_1(s)g(s)\bigr)\,ds.
		\end{align}
		A routine verification shows that
		\[
		\begin{aligned}
			&	\int_{\theta}^{\infty}
			\frac{|\varphi_1(s)|^{2}}
			{|\varphi_1(s)|+\theta e^{-s}}
			u\,ds
			\to
			\int_{0}^{\infty}
			|\varphi_1(s)|\,u\,ds,
			\\[2mm]
			&\int_{\theta}^{\infty}
			\frac{|\varphi_2(s)|^{2}}
			{|\varphi_2(s)|+\theta e^{-s}}
			u\,ds
			\to
			\int_{0}^{\infty}
			|\varphi_2(s)|\,u\,ds,
		\end{aligned}
		\]
		and
		\[
		\int_{0}^{\theta}
		\bigl(\varphi_2(s)g(-s)-\varphi_1(s)g(s)\bigr)\,ds
		\to 0,
		\]
		as $\theta\to 0$. Hence,
		\begin{equation}
			y(0)\to
			\epsilon
			\left(
			\int_{0}^{\infty}|\varphi_{1}(s)|\,ds
			+
			\int_{0}^{\infty}|\varphi_{2}(s)|\,ds
			\right)u
			=
			\epsilon K_{L}u
			\label{limit-y0}
		\end{equation}
		as $\theta\to 0$. Passing to the limit in~\eqref{bouned-y0} gives
		$
		\epsilon K_{L}\le\epsilon K_{1}
		$,
		which contradicts $K_1<K_L$. This shows that no Ulam constant smaller than $K_L$ exists, so $K_L$ is the best Ulam constant of $L$.
		
		\medskip
		\noindent\textbf{Case $\mathbb{K}=\mathbb{R}$.}
		Let $X_{\mathbb{C}}$ denote the complexification of $X$. Fix $\theta>0$ and $u\in X$ with $\|u\|=1$, and set $\widetilde{u}:=(u,0)$. Let $\epsilon>0$ and define
		\[
		\widetilde{g}(t)
		:=
		\epsilon
		\begin{cases}
			\dfrac{\overline{\varphi_2(-t)}}{|\varphi_2(-t)|+\theta e^{t}}\,\widetilde{u},
			& t\in(-\infty,-\theta],\\[2mm]
			\beta(t),
			& t\in(-\theta,\theta),\\[2mm]
			\dfrac{-\overline{\varphi_1(t)}}{|\varphi_1(t)|+\theta e^{-t}}\,\widetilde{u},
			& t\in[\theta,\infty),
		\end{cases}
		\]
		where $\beta:(-\theta,\theta)\to X_{\mathbb{C}}$ is a linear function chosen so that $\widetilde{g}$ is continuous on $\mathbb{R}$ and $\|\beta(t)\|_{T}\le 1$ for all $t\in(-\theta,\theta)$. Then
		\begin{equation}
			\label{bouned-f-Taylor}
			\|\widetilde{g}\|_{T,\infty}\le\epsilon.
		\end{equation}
		
		According to Theorem~\ref{thm-gen-sol-hom}, the function $\widetilde{y}\in \mathcal{C}^{n}(\mathbb{R}, X_{\mathbb{C}})$ given by
		\begin{equation*}
			\widetilde{y}(t)
			=
			\sum_{i=1}^{m}\sum_{j=1}^{n_i}
			t^{j-1}e^{r_it}(-c^{*}_{\widehat{q}(i-1)+j})
			+
			\sum_{i=1}^{m}\sum_{k=1}^{n_i}
			\Psi_{i,k}^{(n_i-k)}(r_i)
			\int_{0}^{t}
			(t-\tau)^{k-1}e^{r_i(t-\tau)}
			\widetilde{g}(\tau)\,d\tau,
			\qquad t\in\mathbb{R},
		\end{equation*}
		where $c^{*}_{\widehat{q}(i-1)+j}$ is as in~\eqref{define-constants-c} with $\widetilde{g}$ in place of $g$, satisfies $\widetilde{L}\widetilde{y}=\widetilde{g}$. Together with~\eqref{bouned-f-Taylor} this gives
		\begin{equation}
			\label{bouned-Lf-taylor}
			\|\widetilde{L}\widetilde{y}\|_{T,\infty}=\|\widetilde{g}\|_{T,\infty}\le \epsilon.
		\end{equation}
		Proceeding as in~\eqref{bouned-y} and~\eqref{limit-y0}, one readily obtains
		\begin{equation}
			\label{bouned-y-Taylor}
			\|\widetilde{y}(t)\|_{T} \le \epsilon K_{L}, \qquad t\in \mathbb{R},
		\end{equation}
		and $\widetilde{y}(0)\to \epsilon K_{L}\widetilde{u}$ as $\theta\to 0$.
		
		Define $y\in \mathcal{C}^{n}(\mathbb{R},X)$ by $y(t)=\pi_{1}(\widetilde{y}(t))$ for $t\in\mathbb{R}$, where $\pi_{1}:X_{\mathbb{C}}\to X$ denotes the first coordinate projection defined in~\eqref{projection}. By~\eqref{Tay-nor-ineq} and~\eqref{bouned-Lf-taylor},
		\begin{equation*}
			\|Ly\|_{\infty}\le \|\widetilde{L}\widetilde{y}\|_{T,\infty}\le \epsilon.
		\end{equation*}
		Since $K_1$ is assumed to be an Ulam constant for $L$, there exists $x\in \ker L$ such that
		\begin{equation}
			\label{estimation-y-real}
			\|y-x\|_{\infty}\le \epsilon K_{1}.
		\end{equation}
		Moreover, by~\eqref{Tay-nor-ineq} and~\eqref{bouned-y-Taylor},
		\[
		\|y(t)\|
		\le
		\|\widetilde{y}(t)\|_{T}
		\le
		\epsilon K_{L},
		\qquad
		t\in\mathbb{R},
		\]
		so that
		\[
		\|x(t)\|\le \|x(t)-y(t)\|+ \|y(t)\| \le \epsilon(K_1 +K_L), \qquad t\in \mathbb{R}.
		\]
		Lemma~\ref{lem4.1} then gives $x=0$. Consequently, \eqref{estimation-y-real} reduces to $\|y\|_{\infty}\le \epsilon K_1$, and in particular
		\begin{equation}
			\label{bouned-y0-real}
			\|y(0)\| \le \epsilon K_1.
		\end{equation}
		
		On the other hand,
		\[
		\|y(0)-\epsilon K_{L}u\|
		\le
		\|\widetilde{y}(0)-\epsilon K_{L}\widetilde{u}\|_{T}
		\to 0
		\]
		as $\theta\to 0$, so that $\|y(0)\|\to\epsilon K_{L}$. Passing to the limit in~\eqref{bouned-y0-real} gives
		$
		\epsilon K_{L}\le\epsilon K_{1}
		$,
		which contradicts the assumption $K_1<K_L$. This completes the proof.
	\end{proof}
	
	Theorem~\ref{thm-best-Ulam} yields several important consequences for the best Ulam constants of special classes of linear differential operators with constant coefficients, extending and unifying several earlier known results, including those in~\cite{Baias-RACSAM,Baias-Mathematics,Miura-JMAA-2003,Onitsuka,Takahasi-Korean}.
	\begin{cor}\label{cor4.1}
		Let \(\operatorname{Re}(r_i)\neq 0\) for \(i\in \mathbb{N}_{m}\), and let \(K_L\) denote the best Ulam constant of \(L\). Then the following assertions hold.
		\begin{enumerate}
			\item[$\mathrm{(i)}$] If $m=1$, then $K_L =|\operatorname{Re}(r_1)|^{-n}$.
			\item[$\mathrm{(ii)}$] If $m=n$, then 
			\begin{equation}\label{best-const-m=n}
				K_L = \dint_{0}^{\infty}\Bigl(\Bigl|\sum_{i\in I_1}\phi_{i}(r_i)e^{-r_{i}s}\Bigr|+\Bigl|\sum_{i\in I_2}\phi_{i}(r_i)e^{r_{i}s}\Bigr|\Bigr) ds.
			\end{equation}
			\item[$\mathrm{(iii)}$] If $n=m=2$, then 
			\begin{equation*}
				K_L
				=
				\frac{1}{|r_1-r_2|}
				\begin{cases}
					\dint_{0}^{\infty}\bigl|e^{-r_{1}s}- e^{-r_{2}s}\bigr|\, ds, & \operatorname{Re}(r_1)>0,\ \operatorname{Re}(r_2)>0, 
					\\[4mm]
					\dint_{0}^{\infty}\bigl|e^{r_{1}s}- e^{r_{2}s}\bigr|\, ds, & \operatorname{Re}(r_1)<0,\ \operatorname{Re}(r_2)<0, 
					\\[4mm]
					\left|\frac{1}{\operatorname{Re}(r_1)}-\frac{1}{\operatorname{Re}(r_2)}\right|, & \operatorname{Re}(r_1)\operatorname{Re}(r_2)<0.
				\end{cases}
			\end{equation*}
		\end{enumerate}
	\end{cor}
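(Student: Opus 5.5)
All three assertions follow by specializing formula~\eqref{best-const} of Theorem~\ref{thm-best-Ulam} and evaluating the resulting integrals. No new stability argument is needed; the work is purely computational.

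For (i), $m=1$ means $n_1=n$ and $\phi_1\equiv 1$. Hence $\Psi_{1,k}(r)=1/((k-1)!(n-k)!)$ is constant, and $\Psi_{1,k}^{(n-k)}(r_1)=0$ unless $k=n$, where it equals $1/(n-1)!$. Exactly one of $I_1,I_2$ equals $\{1\}$ and the other is empty.
\begin{itemize}
\item If $\operatorname{Re}(r_1)<0$, then $K_L=\frac{1}{(n-1)!}\int_0^\infty s^{n-1}e^{\operatorname{Re}(r_1)s}\,ds=|\operatorname{Re}(r_1)|^{-n}$, using $|e^{r_1s}|=e^{\operatorname{Re}(r_1)s}$ and the Gamma integral.
\item If $\operatorname{Re}(r_1)>0$, the same computation with $|(-s)^{n-1}e^{-r_1s}|=s^{n-1}e^{-\operatorname{Re}(r_1)s}$ gives $\operatorname{Re}(r_1)^{-n}$.
\end{itemize}

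For (ii), $m=n$ forces $n_i=1$ for every $i$. Then $k=1$ only, $\Psi_{i,1}^{(0)}(r_i)=\phi_i(r_i)$, and $(-s)^0=s^0=1$, so \eqref{best-const} becomes \eqref{best-const-m=n} verbatim.

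For (iii), take $n=m=2$ in (ii), so $\phi_1(r_1)=(r_1-r_2)^{-1}$ and $\phi_2(r_2)=(r_2-r_1)^{-1}=-\phi_1(r_1)$. I would split into cases according to the signs of the real parts.
\begin{itemize}
\item If both real parts are positive, then $I_1=\{1,2\}$ and $I_2=\emptyset$. The integrand is $|r_1-r_2|^{-1}|e^{-r_1s}-e^{-r_2s}|$, which gives the first line.
\item If both are negative, symmetrically $I_2=\{1,2\}$, which gives the second line.
\item If $\operatorname{Re}(r_1)\operatorname{Re}(r_2)<0$, say $\operatorname{Re}(r_1)>0>\operatorname{Re}(r_2)$ (the formula is symmetric in $r_1,r_2$), each of $I_1,I_2$ is a singleton. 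Then $K_L=|r_1-r_2|^{-1}\bigl(\int_0^\infty e^{-\operatorname{Re}(r_1)s}ds+\int_0^\infty e^{\operatorname{Re}(r_2)s}ds\bigr)=|r_1-r_2|^{-1}\bigl(\frac{1}{\operatorname{Re}(r_1)}+\frac{1}{|\operatorname{Re}(r_2)|}\bigr)$.
\end{itemize}
Since the two real parts have opposite signs, the last bracket equals $\bigl|\frac{1}{\operatorname{Re}(r_1)}-\frac{1}{\operatorname{Re}(r_2)}\bigr|$.

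There is no real obstacle here. The only points needing care are the derivative bookkeeping in (i), namely that only $k=n$ survives because $\phi_1$ is constant, and the sign identity in the mixed case of (iii).
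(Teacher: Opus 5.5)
Your proposal is correct and is essentially the paper's own argument. You specialize the formula of Theorem~\ref{thm-best-Ulam}, keep only the $k=n$ term when $m=1$ (since $\phi_1\equiv 1$), reduce to $k=1$ when $m=n$, and deduce (iii) from (ii), with the one addition that you write out the case-by-case computation for (iii), including the sign identity in the mixed case, which the paper leaves as a direct computation.
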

	\begin{proof}
		By Theorem~\ref{thm-best-Ulam}, the best Ulam constant of $L$ is given by formula~\eqref{best-const}.
		
		\smallskip
		\noindent(i) Let $m=1$. Then $n_1=n$, and by~\eqref{Qi} and~\eqref{Phi_i,k},
		\begin{equation}
			\label{Phi_1,k}
			\Psi_{1,k}(r)=\frac{1}{(k-1)!\,(n-k)!},
			\qquad k \in \mathbb{N}_{n}, \quad r\in \mathbb{C}.
		\end{equation}
		Suppose first that $\operatorname{Re}(r_1)>0$, so that $I_1=\{1\}$ and $I_2=\emptyset$. Since $\Psi_{1,k}^{(n-k)}(r_1) = 0$ for every $k = 1, \ldots, n-1$, it follows from~\eqref{best-const} and~\eqref{Phi_1,k} that
		\[
		K_L
		= \dint_{0}^{\infty}\bigl|\Psi_{1,n}(r_1)(-s)^{n-1}e^{-r_{1}s}\bigr|\,ds
		= \frac{1}{(n-1)!}\int_{0}^{\infty}s^{n-1}e^{-\operatorname{Re}(r_1) s}\,ds
		=( \operatorname{Re}(r_1))^{-n}.
		\]
		The case $\operatorname{Re}(r_1)<0$ is treated in the same way, yielding
		\[
		K_L = (-\operatorname{Re}(r_1))^{-n}=|\operatorname{Re}(r_1)|^{-n}.
		\]
		
		\smallskip
		\noindent(ii) Let $m=n$. Then $n_i=1$,  and
		$	\Psi_{i,1}(r_i)=\phi_i(r_i)
		$ for all $i\in\mathbb{N}_{n}$.
		Substituting into~\eqref{best-const} gives~\eqref{best-const-m=n}.
		
		\smallskip
		\noindent(iii) This follows from part~(ii) by a direct computation.
	\end{proof}
	
Note that assertion~(i) of Corollary~\ref{cor4.1} was established only for $n=1$ in~\cite{Miura-JMAA-2003,Onitsuka,Takahasi-Korean}, for $n=2$ in~\cite{Baias-RACSAM}, and for $n=3$ in~\cite{Baias-Axioms}.
 Assertion~(iii) was established in~\cite{Baias-RACSAM} for \(\mathbb{K}=\mathbb{C}\). If \(m=n>1\), then it is straightforward to verify (see~\cite[Remark~4.1]{Yuan}) that, for every \(i\in\mathbb{N}_n\),
	\begin{equation*}
		\phi_i(r_i)
		=
	\displaystyle\prod_{l=1, l\neq i}^{n}(r_i-r_l)^{-1}
		=
		(-1)^{\,n-i}\frac{v_i}{v},
	\end{equation*}
	where \(v=\det(V)\), \(V=V(r_1,\ldots,r_n)\) is the Vandermonde matrix of order \(n\), \(v_i=\det(V_i)\), and \(V_i=V(r_1,\ldots,r_{i-1},r_{i+1},\ldots,r_n)\) is the Vandermonde matrix of order \(n-1\). Substituting this identity into~\eqref{best-const-m=n}, we recover the formula obtained in~\cite{Baias-Mathematics}, i.e.,  
	\begin{align*}
		K_L =&\dint_{0}^{\infty}\biggl(\biggl|\sum_{i\in I_1}(-1)^{n-i}\frac{v_i}{v}e^{-r_{i}s}\biggr|+\biggl|\sum_{i\in I_2}(-1)^{n-i}\frac{v_i}{v}e^{r_{i}s}\biggr|\biggr) ds
		\\[2mm]
		=&\frac{1}{|v|}\dint_{0}^{\infty}\biggl(\biggl|\sum_{i\in I_1}(-1)^{i}v_{i}e^{-r_{i}s}\biggr|+\biggl|\sum_{i\in I_2}(-1)^{i}v_{i}e^{r_{i}s}\biggr|\biggr) ds.
	\end{align*}  
	
	If all the roots of the characteristic equation~\eqref{char-equ} are real, the best Ulam constant of the operator \(L\) admits the following simple explicit formula.
	\begin{prop}\label{prop-best-real}
		Let $r_i \in \mathbb{R}\setminus \{0\}$ for all $i\in \mathbb{N}_{m}$. Then the best Ulam constant of $L$ is given by 
		\begin{equation}\label{best-con-real}
			K_L = \prod\limits_{i=1}^{m}|r_i|^{-n_i}=|a_n|^{-1}.
		\end{equation}
	\end{prop}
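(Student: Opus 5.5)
The plan is to avoid expanding the derivatives $\Psi_{i,k}^{(n_i-k)}(r_i)$ in \eqref{best-const} directly. Instead I would identify the two kernels $\varphi_1,\varphi_2$ appearing in the proof of Theorem~\ref{thm-best-Ulam} with the two halves of a single Green kernel $G$ on $\mathbb{R}$, and then compute $\int|G|$ through a factorization of $L$ into first‑order operators.

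\emph{Step 1.} Define $G(s):=\varphi_2(s)$ for $s>0$ and $G(s):=-\varphi_1(-s)$ for $s<0$. By \eqref{express-y}, for every bounded $g\in\mathcal{C}(\mathbb{R},X)$ the function
\[
y_g(t)=\int_{-\infty}^{\infty}G(s)\,g(t-s)\,ds
\]
is a bounded solution of $Ly=g$. By Lemma~\ref{lem4.1} it is the unique bounded solution. Moreover, \eqref{best-const} reads $K_L=\int_{\mathbb{R}}|G(s)|\,ds$.

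\emph{Step 2.} With all roots real we can factor $L=\prod_{i=1}^{m}(D-r_i)^{n_i}$, where $D=d/dt$. For each single factor $D-r$ with $r\neq0$ real, the unique bounded solution of $z'-rz=h$ is $z=G_r*h$. Here $G_r(s)=-e^{rs}\mathbf 1_{(-\infty,0)}(s)$ if $r>0$, and $G_r(s)=e^{rs}\mathbf 1_{(0,\infty)}(s)$ if $r<0$.

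\begin{itemize}
\item In both cases $G_r$ has constant sign and $\int_{\mathbb{R}} G_r=-1/r$.
\item Composing the $n$ bounded solution operators gives a bounded solution of $Ly=g$ with kernel $H=G_{r_1}^{*n_1}*\cdots*G_{r_m}^{*n_m}$. This kernel lies in $L^1$, since every $G_r$ does.
\item By the uniqueness in Step 1, $H*g=G*g$ for every bounded continuous $g$. Testing with $g=\chi u$, where $u$ is a unit vector and $\chi$ runs over scalar bump functions, gives $H=G$ almost everywhere. Both kernels are continuous away from $0$, so in fact $H=G$ there.
\end{itemize}

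\emph{Step 3.} This is the key sign observation. Each $G_{r_i}$ with $r_i>0$ is $\le0$, and each $G_{r_i}$ with $r_i<0$ is $\ge0$. A convolution of functions of fixed sign has fixed sign, namely the product of the signs. Hence $G=H$ has constant sign $(-1)^{\sum_{i\in I_1}n_i}$ on all of $\mathbb{R}$. Therefore
\[
K_L=\int_{\mathbb{R}}|G|=\Bigl|\int_{\mathbb{R}}G\Bigr|
=\prod_{i=1}^{m}\Bigl|\int_{\mathbb{R}}G_{r_i}\Bigr|^{n_i}
=\prod_{i=1}^{m}|r_i|^{-n_i},
\]
where we used that the integral of a convolution of $L^1$ functions is the product of the integrals.

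Finally, by Vieta's formulas $a_n=\prod_{i=1}^m(-r_i)^{n_i}$, which gives $\prod_i|r_i|^{-n_i}=|a_n|^{-1}$.

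The main obstacle is Step 2: rigorously identifying the explicit kernel of Theorem~\ref{thm4.1} with the iterated first‑order kernel. I would first try the uniqueness argument via Lemma~\ref{lem4.1} described above. If pointwise identification turns out to be delicate, a fallback is to compare Laplace/Fourier transforms. By construction in Theorem~\ref{thm-gen-sol-hom}, the terms $\Psi_{i,k}^{(n_i-k)}(r_i)$ are exactly the partial‑fraction coefficients of $1/P(r)=\prod_l (r-r_l)^{-n_l}$. Consequently $\widehat G(\xi)=1/P(i\xi)$ for the bilateral transform, and the same holds for $H$. Either route yields $G=H$, and the constant‑sign conclusion then follows.
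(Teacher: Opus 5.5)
Your argument is correct, but it runs in the opposite direction from the paper's.

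\textbf{The paper's route.} The paper does not use Theorem~\ref{thm-best-Ulam} here. The upper bound is Miura's estimate \eqref{const-Miura}, which for real roots equals $\prod_i|r_i|^{-n_i}$ and is carried over to $\mathbb{K}=\mathbb{R}$ by complexification. The lower bound comes from the constant test function $y\equiv\epsilon a_n^{-1}u$: it satisfies $\|Ly\|_\infty=\epsilon$, and Lemma~\ref{lem4.1} forces the approximating kernel element to be $0$. The integral identity \eqref{eleg-result} is only obtained afterwards, by comparison with \eqref{best-const}.

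\textbf{Your route.} You take Theorem~\ref{thm-best-Ulam} as given and prove \eqref{eleg-result} directly, so the proposition becomes a consequence of the main theorem. Your steps hold up. One small point: \eqref{express-y} is written in the paper for a particular $g$. Its derivation via \eqref{Exp-I1}--\eqref{Exp-I_2} uses nothing beyond boundedness of $g$, though. Since $G$ depends only on the roots, you can carry out the identification $G=H$ with scalar test functions.

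\textbf{How the two are linked.} Apply your uniqueness step to the constant forcing $g\equiv u$, whose bounded solution is $a_n^{-1}u$. This gives $\int_{\mathbb{R}}G=1/a_n$ for every admissible $L$. So the paper's lower bound is precisely $|\int_{\mathbb{R}}G|\le\int_{\mathbb{R}}|G|=K_L$.

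\textbf{What each buys.} The paper's route is shorter and independent of the confluent Vandermonde machinery, which is what makes \eqref{eleg-result} a genuine corollary. Yours explains why equality holds for real roots: the Green kernel is a convolution of one-signed exponentials and so cannot change sign. It also shows more generally that $K_L=|a_n|^{-1}$ whenever $G$ does not change sign.
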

	\begin{proof}
		Observe first that
		$
		a_n = (-1)^{n} \prod_{i=1}^{m} r_i^{n_i} \neq 0
		$.
		It was shown in~\cite{Miura} that $K_L = \prod_{i=1}^{m}|r_i|^{-n_i}$ is an Ulam constant for $L$ when $\mathbb{K}=\mathbb{C}$. An argument analogous to the one used in the proof of Theorem~\ref{thm4.1} shows that $K_L$ remains an Ulam constant for $L$ in the real case $\mathbb{K}=\mathbb{R}$.
		
		It remains to show that $K_L$ is the \emph{best} Ulam constant of $L$. Suppose, for contradiction, that $L$ admits an Ulam constant $K<K_L$. Fix $\epsilon>0$ and $u\in X$ with $\|u\|=1$, and set
		\begin{equation}
			\label{def-y}
			y(t):=\epsilon\, a_n^{-1} u, \qquad t\in\mathbb{R}.
		\end{equation}
		Then $\|Ly\|_{\infty}=\epsilon$, so by the assumption that $K$ is an Ulam constant for $L$, there exists $x\in\ker L$ such that
		\begin{equation}
			\label{estimation-prop}
			\|y-x\|_{\infty}\le \epsilon K.
		\end{equation}
		Combining~\eqref{def-y} and~\eqref{estimation-prop} gives, for every $t\in\mathbb{R}$,
		\[
		\|x(t)\| \le \|x(t)-y(t)\| + \|y(t)\| \le \epsilon\bigl(K+|a_n|^{-1}\bigr),
		\]
		and Lemma~\ref{lem4.1} then forces $x=0$. Consequently, \eqref{estimation-prop} reduces to
		\[
		\|y\|_{\infty}\le \epsilon K.
		\]
		On the other hand, \eqref{def-y} gives $\|y\|_{\infty}=\epsilon|a_n|^{-1}=\epsilon K_L$, and therefore
		$
		\epsilon K_L \le \epsilon K,
		$
		which contradicts the assumption $K<K_L$. 
	\end{proof}
	As an application, combining Theorem~\ref{thm-best-Ulam} and Proposition~\ref{prop-best-real} yields the following identity:
	\begin{cor}
		Let $r_i \in \mathbb{R}\setminus \{0\}$ for $i\in \mathbb{N}_{m}$. Then 
		\begin{equation}	\label{eleg-result}
		\dint_{0}^{\infty}\biggl(\biggl|\sum_{i\in I_{1}}\sum_{k=1}^{n_{i}}\Psi_{i,k}^{(n_i-k)}(r_i)(-s)^{k-1}e^{-r_{i}s}\biggr|+\biggl|\sum_{i\in I_{2}}\sum_{k=1}^{n_{i}}\Psi_{i,k}^{(n_i-k)}(r_i)s^{k-1}e^{r_{i}s}\biggr|\biggr)\, ds = \prod\limits_{i=1}^{m}|r_i|^{-n_i}.
		\end{equation}
	\end{cor}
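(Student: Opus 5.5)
This identity follows from the uniqueness of the best Ulam constant. When all $r_i$ are nonzero reals, the hypothesis $\operatorname{Re}(r_i)\neq0$ of Theorem~\ref{thm-best-Ulam} holds, so that theorem gives $K_L$ as the integral on the left-hand side of~\eqref{eleg-result}. Proposition~\ref{prop-best-real} gives the best Ulam constant of the same operator $L$ as $\prod_{i=1}^{m}|r_i|^{-n_i}$.

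The two results concern the same $L$, and the best Ulam constant, when it exists, is the infimum of all Ulam constants. It is therefore a single well-defined number, and the two expressions must coincide.

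The steps are as follows.

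\begin{enumerate}
\item Fix $n$, the multiplicities $n_i$, and the distinct nonzero reals $r_i$. Take the monic polynomial $\prod_i (r-r_i)^{n_i}$, which has real coefficients $a_1,\ldots,a_n$, together with any nonzero Banach space $X$ (for instance $X=\mathbb{R}$).
\item Apply both results to the resulting operator $L$.
\item Conclude by uniqueness of the minimum.
\end{enumerate}

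The only point needing care is that neither theorem's hypotheses go beyond ``roots real and nonzero'', and both statements are valid for $\mathbb{K}=\mathbb{R}$. This holds, since the paper proved each in both the real and complex settings.

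Nothing here is a real obstacle. The one subtlety is that the left-hand side is a purely scalar quantity independent of $X$. One must therefore make sure a suitable $X\neq\{0\}$ exists, so that the best Ulam constant is meaningful: with $X=\{0\}$ every $K\ge0$ works, and both best-constant proofs use a unit vector $u$. Choosing $X=\mathbb{R}$ settles this.
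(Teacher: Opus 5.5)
Your proposal is correct and follows the paper's argument: Theorem~\ref{thm-best-Ulam} and Proposition~\ref{prop-best-real} both identify the best Ulam constant of the same operator $L$, and since that constant is the least Ulam constant, hence unique, the two expressions must coincide. Your extra remark that one needs $X\neq\{0\}$ (so the unit vector $u$ in both lower-bound arguments exists, with $X=\mathbb{R}$ admissible because the left-hand side does not depend on $X$) is a valid point the paper leaves implicit.
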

\begin{proof}
	For pairwise distinct roots $r_1, \ldots , r_m \in \mathbb{R}\setminus \{0\}$ of the characteristic equation~\eqref{char-equ} with respective multiplicities $n_1, \ldots, n_m$, the best Ulam constant of $L$ is given both by formula~\eqref{best-const} of Theorem~\ref{thm-best-Ulam} and by formula~\eqref{best-con-real} of Proposition~\ref{prop-best-real}. Combining these two expressions gives~\eqref{eleg-result}.
\end{proof}
	
	\section{Conclusion}
	
	In this paper, we have completely determined the best Ulam constant of the $n$-th order linear differential operator $L$ with constant coefficients. This extends and completes several known results in the literature, where the best Ulam constant of $L$ had previously been obtained only for the particular cases $n=1,2,3$ and, for general $n$, only under the restrictive assumption that all the roots of the characteristic equation of $L$ are simple. Our result provides the optimal estimate of the difference between an approximate solution of the differential equation associated with the operator $L$ and an exact solution. Consequently, it can be applied to the study of perturbations of dynamical systems governed by such an equation, with potential applications in various branches of science, including physics, engineering, biology, and economics.
	
	\section*{Contributions}
	All authors reviewed the manuscript.
	\section*{Conflicts of interest}
	The authors declare that they have no conflicts of interest for this manuscript.\\
	
	\noindent\textbf{Availability of data and material:} Not applicable.\\
	
	\noindent\textbf{Funding:} This research received no funding.
	
	\bibliographystyle{IEEEtran}

\end{document}